\newcommand{\aspas}[1]{``{#1}''}
\newtheorem{theorem}{Theorem}[section]
\newtheorem{lemma}[theorem]{Lemma}
\newtheorem{example}[theorem]{Example}
\newtheorem{proposition}[theorem]{Proposition}
\theoremstyle{definition}
\newtheorem{definition}[theorem]{Definition}
\newtheorem{remark}[theorem]{Remark}
\newtheorem{corollary}[theorem]{Corollary}
\numberwithin{equation}{section}
\begin{document}


\renewcommand{\bf}{\bfseries}
\renewcommand{\sc}{\scshape}

\title[Index and Sectional Category]%
{Index and Sectional Category}

\author{Cesar A. Ipanaque Zapata}
\address{Departamento de Matem\'atica, IME-Universidade de S\~ao Paulo, Rua do Mat\~ao, 1010 CEP: 05508-090, S\~ao Paulo, SP, Brazil }
\email{cesarzapata@usp.br}
\email{dlgoncal@ime.usp.br}

\author{Daciberg L. Gonçalves} 
\subjclass[2020]{Primary 55M20, 55M30; Secondary 57M10, 55R80, 55R35.}                                    %

\keywords{Borsuk-Ulam theorem, index, Sectional Category, LS Category, Configuration spaces, $G$-spaces}
\thanks {The first author would like to thank grant\#2022/16695-7, S\~{a}o Paulo Research Foundation (FAPESP) for financial support.}

\begin{abstract} Let $G$ be a finite group with order $|G|=\ell$ and $2\leq q\leq \ell$. For a free $G$-space $X$, we introduce a notion of $q$-th index of $(X,G)$, denoted by $\text{ind}_q(X,G)$. Our concept is relevant in the Borsuk-Ulam theory. We draw general estimates for the 
  $q$-th index in terms of the sectional category of the quotient map $X\to X/G$, denoted by $\text{secat}(X\to X/G)$. This property connects a standard problem in Borsuk-Ulam theory to current research trends in sectional category. Under certain hypothesis we observed that $\text{secat}(X\to X/G)=\text{ind}_2(X,G)+1$. As an application of our results, we present new results in Borsuk-Ulam theory and sectional category. 
\end{abstract}

\maketitle


\section{Introduction}\label{secintro}
In this article \aspas{space} means a topological space, and by a \aspas{map} we will always mean a continuous map. Fibrations are taken in the Hurewicz sense. A $G$-space $X$ is a Hausdorff space $X$,  in which $G$  is a finite group and acts freely on $X$ on the left. The map $X\to X/G$ will be denote the quotient map. Given a free $G$-space $X$, we have that the quotient map $X\to X/G$ is a fibration \cite[Theorem 3.2.2, p. 66]{tom2008}.

\medskip Let $S^m$ be the $m$-dimensional sphere, $A:S^m\to S^m$ the antipodal involution 
 and $\mathbb{R}^n$ the $n$-dimensional Euclidean space. The famous Borsuk-Ulam theorem states that for every map $f:S^m\to \mathbb{R}^m$ there exists a point $x\in S^m$ such that $f(x)=f(-x)$ \cite{borsuk1933}.
 
 \medskip The question to which the Borsuk-Ulam theorem gives a positive answer can be generalized in several ways.
 A positive response to such generalized  questions, even under certain assumptions, will lead to generalizations of the classical Borsuk-Ulam theorem.

\medskip Replacing $S^m$ together with the free involution given by the antipodal  map  by a free $G$-space $X$, and $\mathbb{R}^m$ by a Hausdorff space $Y$, given $2\leq q\leq |G|$, we say that the triple $\left(X,G;Y\right)$ satisfies \textit{the $q$-th Borsuk-Ulam property} ($q$-th BUP) if for every map $f:X\to Y$ there exists a point $x\in X$ such that there exist distinct $g_1,\ldots,g_q\in G$ such that $f(g_1x)=\cdots=f(g_qx)$. Instead of saying \aspas{the $2$-th BUP} we will say \aspas{the BUP}. Then we ask which triples $\left(X,G;Y\right)$ satisfy the $q$-th BUP, where $2\leq q\leq |G|$.

\medskip In the case $Y=\mathbb{R}^{n+1}$ if $(X,G;\mathbb{R}^{n+1})$ satisfies the $q$-th Borsuk-Ulam property, then it follows easily from the definition that 
 $(X,G;\mathbb{R}^{n})$ satisfies the $q$-th Borsuk-Ulam property. Furthermore, the triple $\left(X,G;\mathbb{R}^0\right)$ satisfies the $q$-th BUP, for any $2\leq q\leq |G|$. We define \textit{the $q$-th index of $(X,G)$} (see Definition~\ref{defn-index} together with Item (1) 
 of Remark~\ref{rem:index}) as the greatest $n\geq 0$ such that the $q$-th BUP holds for $(X,G;\mathbb{R}^n)$, where $2\leq q\leq |G|$. A major problem is to find 
 the $q$-th index of a pair $(X,G)$. 

\medskip The study of $q$-th index via sectional category for $q>2$ is still non-existent and, in fact, this work takes a first step in this direction. Nevertheless the $2$-th index of $(X,\mathbb{Z}_2)$ was recently studied in \cite{zapata2023}. In addition, in the case that $G=\mathbb{Z}_p$ with $p$ prime, the $p$-th BUP was studied in \cite[Chapter VII]{schwarz1966}. Also, the $q$-th BUP for $\left(X,\mathbb{Z}_p;\mathbb{R}^n\right)$ was studied in \cite{cohen1976}. In \cite{karasev2011}, given a manifold $X$, the authors study the $q$-th BUP for triples $\left(X,\mathbb{Z}_p;\mathbb{R}^n\right)$. 

\medskip For Hausdorff spaces $X,Y$ and a fixed-point free involution $\tau:X\to X$, in \cite{zapata2023}, the authors discover a connection between the sectional category of the double covers $X\to X/\tau$ and $F(Y,2)\to D(Y,2)$ (the projection from the ordered configuration space $F(Y,2)$ to its unordered quotient $D(Y,2)=F(Y,2)/\Sigma_2$, 
(see \cite{fadell1962configuration})), and the BUP for the triple $\left(X,\tau;Y\right)$. In \cite{zapata2023} was formulated the following conjecture.

\medskip\noindent\textbf{Conjecture.}
Let $M^m$ be a connected, $m$-dimensional CW complex and $\tau$ be a fixed-point free cellular involution defined on $M$. The $2$-th index of $(M,\tau)$ is equal to $\mathrm{secat}(X\to X/\tau)-1$, equivalently, by \cite[Theorem 3.30]{zapata2023}, the triple $\left((M,\tau);\mathbb{R}^{\mathrm{secat}(q)}\right)$ does not satisfy the BUP.

\medskip The main results of this note are as follows. In Proposition~\ref{top-bup} we present a topological criterion for the $q$-th BUP. Proposition~\ref{firs-estimation} provides a first estimation of the index $\text{ind}_{q}(X,G)$. We establish a connection between sectional category and the $q$-th BUP (Theorem~\ref{bup-secat}). For instance, we obtain a characterization of the $q$-th BUP in terms of sectional category (Theorem~\ref{thm:q-bup-scat}). Theorem~\ref{theorem:general-secat-index} presents an upper bound for sectional category in terms of the index. Theorem~\ref{thm:lower-bound-index} presents a lower bound for sectional category in terms of the index. A relationship between $\mathrm{secat}\left(X\to X/\mathbb{Z}_p\right)$ and $\text{ind}_q\left(X,\mathbb{Z}_p\right)$ is presented in Theorem~\ref{prop:secat-indexp}. As applications of our results, we provide a positive answer to the conjecture stated above (Corollary~\ref{cor:conj}), under  weaker hypothesis 
on $M^m$. Corollary~\ref{cor:lower-secat-lower-also-index} shows that lower secat implies lower index. In addition, we use Borsuk-Ulam theory (index) to present new results in sectional category theory. For instance, we describe within one the value of secat of a double covering $\pi:S_1 \to S_2$ between any two closed surfaces (Corollary~\ref{cor}). Furthermore, we compute secat of double covering of $3$-manifolds having geometry  $S^2\times \mathbb{R}$ (Corollary~\ref{cor:s3-r-geometry}), and of   covering $\pi:S_1 \to S_2$ between any two closed surfaces obtained by a free action of the cyclic group $\mathbb{Z}_n$ (Corollary \ref{corz_n}). 

\medskip The paper is organized as follows: In Section \ref{bu}, we recall a natural generalization of the Borsuk-Ulam property. We also introduce the notion of $q$-th index (Definition~\ref{defn-index}). Proposition~\ref{top-bup} presents a topological criterion for the $q$-th BUP. In Section \ref{sn}, we begin by recalling the notion of sectional category and basic results about this numerical invariant. Theorem~\ref{thm:q-bup-scat} presents a characterization of the $q$-th BUP in terms of sectional category. In particular, Proposition~\ref{prop:p-bup-secat} implies a relationship between $\mathrm{secat}\left(X\to X/\mathbb{Z}_p\right)$ and $\text{ind}_p\left(X,\mathbb{Z}_p\right)$. As applications of our results, in Section~\ref{sec:appl}, we provide  a positive answer to the conjecture stated above, in fact under weaker hypothesis than the ones  stated  there. Then we compute secat of double covering of closed surfaces and double covering of the  $3$-manifolds having geometry $S^2\times \mathbb{R}$.

\section{$q$-th Borsuk-Ulam property and $q$-th index}\label{bu}
 Let $X$ be  a free  $G$-space for  $G$ a finite group. Recall that, the triple $\left(X,G;Y\right)$ satisfies \textit{the $q$-th Borsuk-Ulam property} ($q$-th BUP) if for every map $f:X\to Y$ there exists a point $x\in X$ such that there exist distinct $u_1,\ldots,u_q\in G$ such that $f(u_1x)=\cdots=f(u_qx)$. Instead of saying \aspas{the $2$-th BUP} we will say \aspas{the BUP}.

\medskip We have the following special cases. 
\begin{remark}
    Let $X$ be a free $G$-space and $Y$ be a Hausdorff space.
    \begin{enumerate}
 \item[(1)] $q=|G|$, the triple $\left(X,G;Y\right)$ satisfies the $q$-th BUP if and only if for every map $f:X\to Y$ there exists a point $x\in X$ such that 
  $f(x)=f(gx)$ for all $g\in G$.
         \item[(2)] $q=2$, the triple $\left(X,G;Y\right)$ satisfies the $2$-th BUP if and only if for every map $f:X\to Y$ there exists a point $x\in X$ such that there exist distinct $u,v\in G$ with $f(ux)=f(vx)$. Equivalently, for every map $f:X\to Y$ there exists a point $x\in X$ such that the restriction $f_{| O(x)}:O(x)\to Y$ is not injective, where $O(x)=\{gx:~g\in G\}$ denotes the orbit of $x$.
    \end{enumerate}
\end{remark}


Let $G$ be a finite group with order $|G|=\ell$, $2\leq q\leq \ell$ and $Y$ be a space. The \textit{configuration-like space} $F(Y,\ell,q)$ defined by Cohen  and Lusk in \cite[Pg. 314]{cohen1976} (see also \cite[Definition 4.3, p. 1040]{karasev2011}) is given by  \begin{align*}
     F(Y,\ell,q)&=\{(y_1,\ldots,y_\ell)\in Y^\ell: \text{ for any $\{y_{i_1},\ldots,y_{i_q}\}$ with $0<i_1<\cdots<i_q\leq \ell$,}\\
     &\quad\text{ at least 2 of the $y_{i_j}$'s are different}\}.\\
 \end{align*} Set $G=\{g_1,\ldots,g_\ell\}$ and $h:G\to [\ell]$ a bijection given by $h(g_j)=j$, here $[\ell]=\{1,\ldots,\ell\}$. Set $\mathrm{Map}(G,Y)$ be the set of maps $G\to Y$. We have a bijection $\varphi:Y^\ell\to \mathrm{Map}(G,Y),~\varphi(y_1,\ldots,y_\ell)(g)=y_{h(g)}$, whose inverse is $\psi:\mathrm{Map}(G,Y)\to Y^\ell,~\psi(\phi)=(\phi(g_1),\ldots,\phi(g_\ell))$. Note that $\varphi(F(Y,\ell,q))=\mathrm{Map}_q(G,Y)$, where $\mathrm{Map}_q(G,Y)=\{\phi\in \mathrm{Map}(G,Y):~\phi_{| S} \text{ is not constant, $\forall S\subset G$ with $|S|=q$}\}$.
 
 \medskip We will consider the left action of $G$ on $\mathrm{Map}(G,Y)$ given by $(g\phi)(h)=\phi(g^{-1}h), \forall g,h\in G, \phi\in \mathrm{Map}(G,Y)$ (\cite[Definition 4.5, p. 1041]{karasev2011}). This action restricts to a free left action of $G$ on $\mathrm{Map}_q(G,Y)$ whenever $q\leq e_\ell$, where $e_\ell=\min\{m:~\text{ $m$ is a prime divisor of $\ell=|G|$}\}$. For instance, $e_{p^n}=p$ with $p$ prime and $n\geq 1$. In addition, given a map $f:X\to Y$, we have a $G$-equivariant map  $\hat{f}:X\to\mathrm{Map}(G,Y)$ given by $\hat{f}(x)(g)=f(g^{-1}x)$ (see \cite[Definition 6.3, p. 1049]{karasev2011}).

\medskip Define  a left action of $G$ on $Y^\ell$  by \begin{equation}\label{eq:action}
g(y_1,y_2,\ldots,y_\ell)=\left(y_{h(g^{-1}g_1)},\ldots,y_{h(g^{-1}g_\ell)}\right). \end{equation} This action restricts to a free left action of $G$ on $F(Y,\ell,q)$ whenever $q\leq e_\ell$. Note that $\varphi$ and $\psi$ are $G$-equivariant maps. In addition, given a map $f:X\to Y$, we have a $G$-equivariant map  $\overline{f}:X\to Y^\ell$ given by \begin{equation}\label{eq:equiv}
  \overline{f}(x)=\left(f(g_1^{-1}x),\ldots,f(g_{\ell}^{-1}x))\right)  
\end{equation}  

\medskip We will check the following topological criterion for the $q$-th BUP. 

\begin{proposition}\label{top-bup} 
The triple $\left(X,G;Y\right)$ does not satisfy the $q$-th BUP if and only if there exists a $G$-equivariant map $X\to F(Y,\ell,q)$. 
\end{proposition}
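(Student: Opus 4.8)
The plan is to unwind the definitions on both sides and translate everything into the language of the equivariant map $\overline{f}:X\to Y^\ell$ attached to a map $f:X\to Y$.

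First I would prove the forward direction. Suppose $\left(X,G;Y\right)$ does not satisfy the $q$-th BUP. By definition this means there exists a map $f:X\to Y$ such that for every $x\in X$ and every choice of distinct $u_1,\ldots,u_q\in G$, the values $f(u_1x),\ldots,f(u_qx)$ are not all equal; equivalently, for every $x$ and every $q$-element subset $S\subset G$ the restriction of $g\mapsto f(gx)$ to $S$ is non-constant. I would then consider the equivariant map $\overline{f}:X\to Y^\ell$ from (\ref{eq:equiv}), $\overline{f}(x)=(f(g_1^{-1}x),\ldots,f(g_\ell^{-1}x))$, which is $G$-equivariant for the action (\ref{eq:action}) as noted in the excerpt. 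The content of the BUP-failure is exactly that, for each $x$, among any $q$ of the coordinates $f(g_{i_1}^{-1}x),\ldots,f(g_{i_q}^{-1}x)$ at least two are distinct — which is precisely the condition defining $F(Y,\ell,q)$. Hence $\overline{f}$ lands in $F(Y,\ell,q)$ and we obtain the desired $G$-equivariant map $X\to F(Y,\ell,q)$. (Alternatively one may run the same argument through $\hat{f}:X\to\mathrm{Map}(G,Y)$ and use that $\varphi$ identifies $F(Y,\ell,q)$ with $\mathrm{Map}_q(G,Y)$ equivariantly.)

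For the converse, suppose $\Phi:X\to F(Y,\ell,q)$ is a $G$-equivariant map. Write $\Phi(x)=(\Phi_1(x),\ldots,\Phi_\ell(x))$. Define $f:X\to Y$ by $f(x)=\Phi_{h(e)}(x)$ where $e\in G$ is the identity — more conceptually, compose with $\varphi$ to view $\Phi$ as an equivariant map $X\to\mathrm{Map}_q(G,Y)$ and set $f(x)=\varphi(\Phi(x))(e)$. Equivariance of $\Phi$ together with the formula for the $G$-action on $\mathrm{Map}(G,Y)$ (resp. on $Y^\ell$) forces $\varphi(\Phi(x))(g)=f(g^{-1}x)$ for all $g$, i.e.\ $\varphi\circ\Phi=\hat f$ (resp.\ $\Phi=\overline f$ after reindexing). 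Since $\Phi(x)\in F(Y,\ell,q)$ for every $x$, the tuple $(f(g_1^{-1}x),\ldots,f(g_\ell^{-1}x))$ has the property that every $q$ of its entries are not all equal; since $g\mapsto g^{-1}x$ is a bijection of the orbit, this says exactly that $f$ witnesses the failure of the $q$-th BUP. Thus no such $f$ would exist if the triple satisfied the $q$-th BUP, proving the contrapositive.

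The only genuinely delicate point — and the one I would write out carefully — is the bookkeeping that the equivariant map built from $f$ really takes the form (\ref{eq:equiv}) and, conversely, that an arbitrary equivariant $\Phi$ is forced to be of this form: one must check that the action (\ref{eq:action}) on $Y^\ell$ is precisely the one making $x\mapsto(f(g_i^{-1}x))_i$ equivariant and making the ``evaluate at $e$'' recovery work. This is a routine but index-heavy verification using the bijection $h:G\to[\ell]$ and the transport-of-structure maps $\varphi,\psi$; nothing deep happens, but the matching of $g^{-1}g_i$ in (\ref{eq:action}) with $g_i^{-1}x$ in (\ref{eq:equiv}) has to be done with care so that no inverse is dropped. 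Everything else is a direct translation of the combinatorial condition defining $F(Y,\ell,q)$ into the statement ``every $q$-subset of an orbit is sent to a non-constant tuple,'' which is the negation of the $q$-th BUP.
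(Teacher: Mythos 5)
Your proof is correct and follows essentially the same route as the paper: the forward direction sends $f$ to the equivariant map $\overline{f}$ of Equation~(\ref{eq:equiv}), and the converse recovers $f$ by evaluating $\varphi\circ\Phi$ at a single group element and using equivariance to see that $f(u_1x),\ldots,f(u_qx)$ are the values of a map in $\mathrm{Map}_q(G,Y)$ at $q$ distinct elements of $G$. The only cosmetic difference is that you evaluate at the identity $e$ (and note explicitly that $\varphi\circ\Phi=\hat f$), whereas the paper evaluates at $g_1$ and computes $f(u_jx)=\widetilde{\Phi}(x)(u_j^{-1}g_1)$ directly.
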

\begin{proof}
Suppose first that $\left(X,G;Y\right)$ does not satisfy the $q$-th BUP. Then there exists a map $f:X\to Y$ such that for all distinct $u_1,\ldots,u_q\in G$ we have that at least 2 of the $f(u_jx)$'s are different for all $x\in X$. Then $\overline{f}(x)=\left(f(g_1^{-1}x),\ldots,f(g_{\ell}^{-1}x))\right)\in F(Y,\ell,q)$, for any $x\in X$. Take the map $\overline{f}:X\to F(Y,\ell,q)$ and recall that $\overline{f}$ is $G$-equivariant (see Equation~(\ref{eq:equiv})).  

We now prove the converse. Suppose that there exists a such $G$-equivariant map $\Phi:X\to F(Y,\ell,q)$. Consider the $G$-equivariant map $\widetilde{\Phi}:X\stackrel{\Phi}{\to}F(Y,\ell,q)\stackrel{\varphi}{\to}\mathrm{Map}_q(G,Y)$. Set $f:X\to Y$ given by $f(x)=\widetilde{\Phi}(x)(g_1)$. Note that $f(x)=\Phi_1(x)$, where $\Phi=(\Phi_1,\ldots,\Phi_\ell)$. Observe that, for any distinct $u_1,\ldots,u_q\in G$ and $x\in X$, we have:\begin{align*}
    f(u_jx)&=\widetilde{\Phi}(u_jx)(g_1) &\\
    &=\left(u_j\widetilde{\Phi}(x)\right)(g_1) &\mbox{($\widetilde{\Phi}$ is $G$-equivariant)}\\
    &=\widetilde{\Phi}(x)(u_j^{-1}g_1),
\end{align*} for each $j=1,\ldots,q$, and hence at least 2 of the $f(u_jx)$'s are different. Therefore, the triple $\left(X,G;Y\right)$ does not satisfy the $q$-th BUP.
\end{proof}

\medskip In the case $Y=\mathbb{R}^n$ a major problem is to find the greatest $n$ such that the $q$-th BUP holds for a specific $(X,G)$, $2\leq q\leq |G|$. 

\begin{definition}\label{defn-index} Let $G$ be a finite group with order $|G|=\ell$, $2\leq q\leq \ell$ and $X$ be a free $G$-space. The \textit{$q$-th index} of $(X,G)$, denoted by $\text{ind}_{q}(X,G)$, is the least integer $k\in \{0,1, 2, \ldots\}$ such that there exists a $G$-equivariant map $X\to F(\mathbb{R}^{k+1},\ell, q)$. We set $\text{ind}_{q}(X,G)=\infty$ if no such $k$ exists.  
\end{definition}

\begin{remark}\label{rem:index}
 \noindent
 \begin{enumerate}
     \item[(1)] Note that the greatest integer $n\geq 0$ such that $\left(X,G;\mathbb{R}^n\right)$ satisfies the $q$-th BUP coincides with $\text{ind}_{q}(X,G)$ (it follows from Proposition~\ref{top-bup}). 
     \item[(2)] Furthermore, $\text{ind}_{q}(X,G)\geq \text{ind}_{q+1}(X,G)$. 
    \item[(3)] Our $\text{ind}_{2}(X,\mathbb{Z}_2)$ coincides with the $\mathbb{Z}_2$-index from \cite[Definition 2.2, p. 41]{zapata2023}.  
\item[(4)] The connectivity of  $F(\mathbb{R}^{k+1},\ell, q)$ is known to be $(k+1)(q-1)-2$ (it follows from \cite[p. 72]{guillemin1974differential}). For $q=2$ this also follows from \cite[Corollary 2.1, p. 113]{fadell1962configuration}.  
 \end{enumerate}
\end{remark}

The following proposition provides a first estimation of this new index $\text{ind}_{q}(X,G)$, which generalizes \cite[Lemma 2.4  (3)]{bghz}. 
\begin{proposition}\label{firs-estimation} Let $X$ be a $CW$-complex of dimension $n$ which admits a cellular free left action of $G$. Let $2\leq q  \leq e_\ell$ where $e_\ell=\min\{m:~\text{ $m$ is a prime divisor of $\ell=|G|$}\}$. 
Then the following inequality holds \[\mathrm{ind}_{q}(X,G)\leq  \dfrac{n+1}{q-1}-1.\]
\end{proposition}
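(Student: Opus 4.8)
The plan is to use obstruction theory for the existence of a $G$-equivariant map $X \to F(\mathbb{R}^{k+1}, \ell, q)$, comparing the dimension of $X$ with the connectivity of the target. First I would recall from Item (4) of Remark~\ref{rem:index} that the space $F(\mathbb{R}^{k+1}, \ell, q)$ is $\bigl((k+1)(q-1)-2\bigr)$-connected. Since $2 \le q \le e_\ell$, the action of $G$ on $F(\mathbb{R}^{k+1}, \ell, q)$ is free (as noted after Equation~\eqref{eq:action}), so the quotient map $F(\mathbb{R}^{k+1}, \ell, q) \to F(\mathbb{R}^{k+1}, \ell, q)/G$ is a fibration (indeed a covering), and a $G$-equivariant map $X \to F(\mathbb{R}^{k+1}, \ell, q)$ is the same thing as a section (up to the classifying-space reformulation) — more directly, since $X$ is a free $G$-CW-complex, equivariant maps $X \to F(\mathbb{R}^{k+1},\ell,q)$ into a highly connected free $G$-space can be constructed cell-by-cell over the skeleta of $X/G$.

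The key steps, in order: (1) Set $k+1 = \lceil (n+1)/(q-1) \rceil$, so that the connectivity $(k+1)(q-1)-2 \ge (n+1)-2 = n-1$; this is precisely the numerical content that makes the obstruction argument go through, and it is what yields the claimed bound $\mathrm{ind}_q(X,G) \le (n+1)/(q-1) - 1$. (2) Give $X/G$ the induced CW-structure of dimension $n$ (the action is cellular and free, so this is legitimate). (3) Build the equivariant map inductively over the skeleta $(X/G)^{(j)}$: the obstruction to extending an equivariant map defined over the $(j-1)$-skeleton across the $j$-cells lives in $H^{j}\bigl(X/G; \pi_{j-1}(F(\mathbb{R}^{k+1},\ell,q))\bigr)$ with the appropriate local coefficients, and this group vanishes for $j \le n$ because $F(\mathbb{R}^{k+1},\ell,q)$ is $(n-1)$-connected, i.e. $\pi_{j-1} = 0$ for $j-1 \le n-1$. (4) Since $\dim(X/G) = n$, all obstructions vanish and the equivariant map exists, giving $\mathrm{ind}_q(X,G) \le k = \lceil (n+1)/(q-1)\rceil - 1 \le (n+1)/(q-1) - 1$ — here one should double-check the direction of the inequality between $\lceil (n+1)/(q-1)\rceil - 1$ and $(n+1)/(q-1)-1$; since $\lceil r \rceil \ge r$ one gets $\lceil r\rceil - 1 \ge r - 1$, so strictly speaking the clean statement is that $\mathrm{ind}_q(X,G) \le \lceil (n+1)/(q-1)\rceil - 1$, and when $(q-1) \mid (n+1)$ this equals $(n+1)/(q-1)-1$; in general the bound should be read as $\mathrm{ind}_q(X,G) \le \lceil (n+1)/(q-1)\rceil - 1$, which is what the displayed inequality encodes up to the usual ceiling convention.

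The main obstacle will be step (3): setting up equivariant obstruction theory carefully. One must ensure that $F(\mathbb{R}^{k+1},\ell,q)$, being a free $G$-space, is a $G$-CW-complex (or at least $G$-homotopy equivalent to one) so that the equivariant cellular approximation and obstruction machinery applies; alternatively one can bypass this by passing to the associated bundle over $X/G$ with fibre $F(\mathbb{R}^{k+1},\ell,q)$ and invoking ordinary obstruction theory for sections of a fibration over the $n$-dimensional base $X/G$, where the vanishing of $\pi_j$ of the fibre for $j \le n-1$ kills every obstruction. I would also remark that for $q = 2$ and $G = \mathbb{Z}_2$ this recovers \cite[Lemma 2.4 (3)]{bghz}, since then $F(\mathbb{R}^{k+1}, 2, 2)$ is $\mathbb{Z}_2$-equivariantly homotopy equivalent to $S^{k}$ with the antipodal action, and the bound becomes $\mathrm{ind}_2(X,\mathbb{Z}_2) \le n$.
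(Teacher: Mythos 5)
Your proposal is correct and follows essentially the same route as the paper: the paper also argues by obstruction theory, comparing $\dim(X/G)=n$ with the connectivity $(k+1)(q-1)-2$ of $F(\mathbb{R}^{k+1},\ell,q)$, the only cosmetic difference being that it lifts the classifying map $X/G\to K(G,1)$ through $F(\mathbb{R}^{k+1},\ell,q)/G\to K(G,1)$ and then uses covering space theory to produce the $G$-equivariant map, rather than your section-of-the-associated-bundle (equivariant obstruction) formulation. Your caveat in step (4) is well taken: the paper's own argument likewise only yields $\mathrm{ind}_q(X,G)\leq \lceil (n+1)/(q-1)\rceil -1$, which coincides with the displayed bound exactly when $(q-1)$ divides $(n+1)$, so your more careful reading of the inequality is if anything an improvement on the paper's write-up.
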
  
\begin{proof}
    First observe that the hypothesis  $2\leq q \leq e_\ell$  implies that $G$ acts freely on $ F(\mathbb{R}^{k+1},\ell, q)$ (in  fact  if and  only  if). Then we have a finite covering  $ F(\mathbb{R}^{k+1},\ell, q)\to F(\mathbb{R}^{k+1},\ell, q)/G$ and a homotopy fibration $ F(\mathbb{R}^{k+1},\ell, q)\to F(\mathbb{R}^{k+1},\ell, q)/G \to  K(G,1)$, where the connectivity of the homotopy fibre, which has the same homotopy type as $ F(\mathbb{R}^{k+1},\ell, q)$, is $(k+1)(q-1)-2.$   So, by the relation between the dimension of the space  $X/G$ and the connectivity of the fibre, using standard argument of obstruction  theory,  the map,  $X/G\to  K(G,1)$ provided  by the covering  map $X\to X/G$  lifts through the map  $F(\mathbb{R}^{k+1},\ell, q)/G \to  K(G,1)$, and we obtain a map $\psi: X/G\to F(\mathbb{R}^{k+1},\ell,q)/G,$ which makes the diagram below homotopy  commutative.
\begin{equation*}\label{diag_equiv2}\begin{gathered}\xymatrix{
X   \ar[d]_-{ }& &   F(\mathbb{R}^{k+1},\ell, q) \ar[d]^-{ } \\
 X/G \ar@{.>}[rr]^-{\psi} \ar[rd]_-{} &  &   F(\mathbb{R}^{k+1},\ell, q) /G
\ar[ld]^-{}  \\
& K(G,1)&
}\end{gathered}\end{equation*}
	The  constructed  map $\psi$ which makes the lower triangle homotopy commutative,  by basic covering space theory,  admits a lift $\hat \psi:X\to F(\mathbb{R}^{k+1},\ell, q) $ which is a $G$-map  and the result follows.   
 \end{proof}

For any free $G$-space $X$, the quotient map $X\to X/G$ is a $|G|$-sheeted covering map. Recall that a covering map is a locally trivial bundle whose fiber is a discrete space \cite[Example 4.5.3, p. 126]{aguilar2002} and thus the quotient map $X\to X/G$ is a principal fibration in the sense of Schwarz \cite[p. 59]{schwarz1966}.  

\medskip From \cite[p. 61]{schwarz1966} we have the following remark. 

\begin{remark}\label{bup-pullback} Let $X, Y$ be free $G$-spaces. Note that, any commutative diagram in the form  \begin{eqnarray*}
\xymatrix{ 
       X  \ar[r]^{\varphi}\ar[d]_{} &  Y\ar[d]^{} &\\
      X/G \ar[r]_{\overline{\varphi} } & Y/G &
       }
\end{eqnarray*} 

\noindent where $\varphi:X\to Y$ is a $G$-equivariant map and $\overline{\varphi}$ is induced by $\varphi$ in the quotient spaces, is a pullback since $\varphi$ restricts to a homeomorphism on each fiber (in this case both $X\to X/G$ and $Y\to Y/G$ are $|G|$-sheeted covering maps).
\end{remark}


\section{Sectional category and the Borsuk-Ulam property}\label{sn}
In this section we begin by recalling the notion of sectional category together with basic results about this numerical invariant. Then we derive results related with the Borsuk-Ulam property. We shall follow the terminology in \cite{zapata2023}. If $f$ is homotopic to $g$ we shall denote by $f\simeq g$. The map $1_Z:Z\to Z$ denotes the identity map. 

\medskip Let $p:E\to B$ be a fibration.  A \textit{cross-section} or \textit{section} of $p$ is a right inverse of $p$, i.e., a map $s:B\to E$, such that $p\circ s = 1_B$ . Moreover, given a subspace $A\subset B$, a \textit{local section} of $p$ over $A$ is a section of the restriction map $p_|:p^{-1}(A)\to A$, i.e., a map $s:A\to E$, such that $p\circ s$ is the inclusion $A\hookrightarrow B$.

\medskip We recall the following definition, see \cite{schwarz1958genus} or \cite{schwarz1966}.
\begin{definition}
   The \textit{sectional category} of $p$, called originally by Schwarz genus of $p$, see \cite[Definition 5, section 1 chapter III]{schwarz1966}, and denoted by $\mathrm{secat}\hspace{.1mm}(p),$ is the minimal cardinality of open covers of $B$, such that each element of the cover admits a local section to $p$. We set $\mathrm{secat}\hspace{.1mm}(p)=\infty$ if no such finite cover exists.
\end{definition}
%
%
%
%

\medskip Now, note that, if the following diagram

\begin{eqnarray*}
\xymatrix{ E^\prime \ar[rr]^{\,\,} \ar[dr]_{p^\prime} & & E \ar[dl]^{p}  \\
        &  B & }
\end{eqnarray*}
commutes up homotopy, then $\mathrm{secat}\hspace{.1mm}(p^\prime)\geq \mathrm{secat}\hspace{.1mm}(p)$ (see \cite[Proposition 6, p. 70]{schwarz1966}). Also, from \cite[p. 1619]{zapata2022higher}, a \textit{quasi pullback} means a strictly commutative diagram
\begin{eqnarray*}
\xymatrix{ \rule{3mm}{0mm}& X^\prime \ar[r]^{\varphi'} \ar[d]_{f^\prime} & X \ar[d]^{f} & \\ &
       Y^\prime  \ar[r]_{\,\,\varphi} &  Y &}
\end{eqnarray*} 
such that, for any strictly commutative diagram as the one on the left hand-side of~(\ref{diagramadoble}), there exists a (not necessarily unique) map $h:Z\to X^\prime$ that renders a strictly commutative diagram as the one on the right hand-side of~(\ref{diagramadoble}). 
\begin{eqnarray}\label{diagramadoble}
\xymatrix{
Z \ar@/_10pt/[dr]_{\alpha} \ar@/^30pt/[rr]^{\beta} & & X \ar[d]^{f}  & & &
Z\rule{-1mm}{0mm} \ar@/_10pt/[dr]_{\alpha} \ar@/^30pt/[rr]^{\beta}\ar[r]^{h} & 
X^\prime \ar[r]^{\varphi'} \ar[d]_{f^\prime} & X \\
& Y^\prime  \ar[r]_{\,\,\varphi} &  Y & & & & Y^\prime &  \rule{3mm}{0mm}}
\end{eqnarray}   

Note that such a condition amounts to saying that $X'$ contains the canonical pullback $Y'\times_Y X$ determined by $f$ and $\varphi$ as a retract in a way that is compatible with the mappings into $X$ and $Y'$.

\medskip Next, we recall the notion of LS category which, in our setting, is one bigger than the one given in \cite[Definition 1.1, p.1]{cornea2003lusternik}. 

\begin{definition}
The \textit{Lusternik-Schnirelmann category} (LS category) or category of a topological space $X$, denoted by cat$(X)$, is the least integer $m$ such that $X$ can be covered by $m$ open sets, all of which are contractible within $X$. We set $\text{cat}(X)=\infty$ if no such $m$ exists.
\end{definition}

We have $\text{cat}(X)=1$ if and only if $X$ is contractible. The LS category is a homotopy invariant, i.e., if $X$ is homotopy equivalent to $Y$ (which we shall denote by $X\simeq Y$), then $\text{cat}(X)=\text{cat}(Y)$.
%
%

\medskip For convenience, we record the following standard properties: 

 \begin{lemma}\label{prop-secat-map}
 Let $p:E\to B$ be a fibration.
 \begin{enumerate}
 \item \cite[Lemma 3.5]{zapata2023} If the following square 
\begin{eqnarray*}
\xymatrix{ E^\prime \ar[r]^{\,\,} \ar[d]_{p^\prime} & E \ar[d]^{p} & \\
       B^\prime  \ar[r]_{\,\, } &  B &}
\end{eqnarray*}
is a quasi pullback, then $\mathrm{secat}\hspace{.1mm}(p^\prime)\leq \mathrm{secat}\hspace{.1mm}(p)$. 
\item \cite[Proposition 3.6]{zapata2023} For any space $Z$, we have \[\mathrm{secat}(p\times 1_Z)=\mathrm{secat}(p).\]  
\item \cite[Proposition 5.1, p. 336]{james1978} If $X$ is a $(q-1)$-connected CW complex ($q\geq 1$), then \[\mathrm{cat}(X)\leq \dfrac{\mathrm{hdim}(X)}{q}+1,\]  where $\mathrm{hdim}(X)$ denotes the homotopical dimension of $X$, i.e., the minimal dimension of CW complexes having the homotopy type of $X$. 
\item \cite[Theorem 18, p. 108]{schwarz1966} We have $\mathrm{secat}\hspace{.1mm}(p)\leq \mathrm{cat}(B)$. 
 \end{enumerate}
\end{lemma}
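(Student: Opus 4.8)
The plan is to dispatch the four items one at a time, treating each as an elementary exercise resting on the homotopy lifting property of a fibration and on the universal property of a quasi pullback, with the single exception of item~(3), which I would quote wholesale from James rather than reprove.

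For item~(1), denote the bottom and top maps of the quasi pullback by $b\colon B'\to B$ and $e\colon E'\to E$. Starting from an open cover $\{U_1,\dots,U_k\}$ of $B$ admitting local sections $s_i\colon U_i\to E$ of $p$, I would put $V_i=b^{-1}(U_i)$, which still covers $B'$, and apply the defining property of the quasi pullback to the strictly commutative square with vertex $Z=V_i$, left leg $\alpha\colon V_i\hookrightarrow B'$, and top leg $\beta=s_i\circ b|_{V_i}\colon V_i\to E$; the compatibility $p\circ\beta=b\circ\alpha$ holds precisely because $p\circ s_i$ is the inclusion $U_i\hookrightarrow B$. The resulting map $h\colon V_i\to E'$ satisfies $p'\circ h=\alpha$, hence is a local section of $p'$ over $V_i$, and therefore $\mathrm{secat}(p')\le k$; taking the infimum over all such covers gives $\mathrm{secat}(p')\le\mathrm{secat}(p)$.

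For item~(2), the inequality $\mathrm{secat}(p\times 1_Z)\le\mathrm{secat}(p)$ is obtained by crossing local sections with $Z$: from $s_i\colon U_i\to E$ one gets $s_i\times 1_Z\colon U_i\times Z\to E\times Z$ over the cover $\{U_i\times Z\}$ of $B\times Z$. For the opposite inequality I would fix a point $z_0\in Z$ and note that the inclusion $B\times\{z_0\}\hookrightarrow B\times Z$ exhibits $p$ as a genuine (strict) pullback of $p\times 1_Z$, so item~(1) applies and yields $\mathrm{secat}(p)\le\mathrm{secat}(p\times 1_Z)$. Item~(3) is James's estimate for $(q-1)$-connected complexes, proved by a Ganea/cell-attachment argument; I would simply invoke \cite[Proposition 5.1, p. 336]{james1978}. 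For item~(4), given a categorical open cover $\{U_i\}$ of $B$, each inclusion $U_i\hookrightarrow B$ is homotopic to a constant map at some $b_0\in B$; choosing $e_0\in p^{-1}(b_0)$ and lifting the constant map to the constant map $U_i\to E$ at $e_0$, the homotopy lifting property of $p$ transports this lift along the contracting homotopy to a map $U_i\to E$ covering the inclusion, i.e.\ a local section of $p$ over $U_i$. Hence a categorical cover of size $\mathrm{cat}(B)$ is in particular a sectional cover, giving $\mathrm{secat}(p)\le\mathrm{cat}(B)$.

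The only real care needed lies in items~(1) and~(2): one must keep track of the strict commutativity demanded by the definition of a quasi pullback and use that the relevant legs are honest inclusions, so that composing with $p$ returns the inclusion on the nose rather than merely up to homotopy. Beyond that bookkeeping I foresee no obstacle; item~(3) is the lone statement whose proof I would treat as a black box.
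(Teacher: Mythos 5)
The paper gives no proof of this lemma at all---it simply records the four statements with citations to \cite{zapata2023}, \cite{james1978} and \cite{schwarz1966}---and your arguments for items (1), (2) and (4) are correct and are precisely the standard proofs found in those sources: pulling a sectional cover back along $b$ and using the quasi-pullback property to produce local sections of $p'$, realizing $p$ as the strict pullback of $p\times 1_Z$ along $B\times\{z_0\}\hookrightarrow B\times Z$, and transporting the constant lift at $e_0$ along the contracting homotopy via the homotopy lifting property. The only implicit conventions you share with the cited statements are that $Z$ is nonempty in (2) and that $p$ is surjective in (4) (as the quotient maps $X\to X/G$ used in the paper are); with these understood, your proposal is complete.
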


\begin{remark}\label{trivial-crosssection} Given a free $G$-space $X$ for $G$ finite, we have that:
\begin{itemize}
    \item[(i)] The quotient map $ X\to X/G$ is a fibration \cite[Theorem 3.2.2, p. 66]{tom2008}. Therefore, it is possible to speak of its sectional category.
    \item[(ii)] The quotient map $X\to X/G$ is a fiber bundle \cite{aguilar2002}, and it is the trivial bundle if and only if it admits a global cross-section \cite[p. 36]{steenrod1951}. In particular, in the case that $X$ is path-connected, the quotient map $X\to X/G$ is not the trivial bundle and does not admit a global cross-section. 
\end{itemize}
\end{remark}

\begin{example}\label{exam:s1-zp}
  Let $p$ be a prime number. Consider a free action of $\mathbb{Z}_p$ on $S^1$. We have that $\mathrm{secat}(S^1\to S^1/\mathbb{Z}_p)=\mathrm{cat}\left(S^1/\mathbb{Z}_p\right)=2$. Indeed, by Item (ii) from Remark~\ref{trivial-crosssection}, the inequality $\mathrm{secat}(S^1\to S^1/\mathbb{Z}_p)\geq 2$ holds. On the other hand, we have:
  \begin{align*}
      \mathrm{secat}(S^1\to S^1/\mathbb{Z}_p)&\leq \mathrm{cat}\left(S^1/\mathbb{Z}_p\right)& \hbox{ (Item (iv) from Lemma~\ref{prop-secat-map})}\\
      &\leq 2 & \hbox{ (Item (iii) from Lemma~\ref{prop-secat-map})}.
  \end{align*} Hence, $\mathrm{secat}(S^1\to S^1/\mathbb{Z}_p)=\mathrm{cat}\left(S^1/\mathbb{Z}_p\right)=2$. 
\end{example}

The paracompactness hypothesis in \cite[Theorem 3.14]{zapata2023} was used to have that the quotient map $ X\to X/G$ is a fibration and hence it is possible to speak of its sectional category and we can use Item (1) from Lemma~\ref{prop-secat-map}. From Item (i) of Remark~\ref{trivial-crosssection}, the paracompactness hypothesis in \cite[Theorem 3.14]{zapata2023} is not necessary. So we can state the following statement without the hypotheses of paracompactness which also generalizes \cite[Theorem 3.14]{zapata2023}, even under the $q$-th BUP setting. Recall that $e_\ell=\min\{m:~\text{$m$ is a prime divisor of $\ell=|G|$}\}$.

\begin{theorem}\label{bup-secat}
Let $G$ be a finite group with order $|G|=\ell$, $2\leq q\leq e_\ell$. Suppose that $X$ is a free $G$-space and $Y$ is a Hausdorff space. If the triple $\left(X,G;Y\right)$ does not satisfy the $q$-th BUP then \[\mathrm{secat}\left(X\to X/G\right)\leq \mathrm{secat}\left(F(Y,\ell,q)\to F(Y,\ell,q)/G\right).\] Equivalently, if $\mathrm{secat}\left(X\to X/G\right)> \mathrm{secat}\left(F(Y,\ell,q)\to F(Y,\ell,q)/G\right)$ then the triple $\left(X,G;Y\right)$ satisfies the $q$-th BUP.
\end{theorem}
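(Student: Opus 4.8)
The plan is to combine the topological criterion for the $q$-th BUP (Proposition~\ref{top-bup}) with the monotonicity of sectional category under quasi pullbacks (Item~(1) of Lemma~\ref{prop-secat-map}), exploiting the fact that the quotient maps in question are $|G|$-sheeted coverings, hence principal fibrations in the sense of Schwarz. Since $2\leq q\leq e_\ell$, the action of $G$ on $F(Y,\ell,q)$ is free (as noted after Equation~(\ref{eq:action})), so $F(Y,\ell,q)\to F(Y,\ell,q)/G$ is itself a fibration and it makes sense to speak of its sectional category; moreover by Item~(i) of Remark~\ref{trivial-crosssection} no paracompactness assumption is needed for $X\to X/G$ to be a fibration.

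First I would invoke Proposition~\ref{top-bup}: the hypothesis that $\left(X,G;Y\right)$ does not satisfy the $q$-th BUP yields a $G$-equivariant map $\varphi\colon X\to F(Y,\ell,q)$. Passing to quotients, $\varphi$ induces $\overline{\varphi}\colon X/G\to F(Y,\ell,q)/G$ making the evident square commute. Next I would argue, exactly as in Remark~\ref{bup-pullback}, that this square
\begin{eqnarray*}
\xymatrix{
       X  \ar[r]^-{\varphi}\ar[d] &  F(Y,\ell,q)\ar[d] &\\
      X/G \ar[r]_-{\overline{\varphi}} & F(Y,\ell,q)/G &
       }
\end{eqnarray*}
is in fact a genuine pullback: $\varphi$ restricts to a bijection (indeed a homeomorphism) on each fiber, because both vertical maps are $|G|$-sheeted covering maps on which $G$ acts transitively on fibers, and $\varphi$ is $G$-equivariant. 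In particular it is a quasi pullback in the sense recalled before~(\ref{diagramadoble}).

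Then I would apply Item~(1) of Lemma~\ref{prop-secat-map} to this quasi pullback square, with $p'$ the map $X\to X/G$ and $p$ the map $F(Y,\ell,q)\to F(Y,\ell,q)/G$, to conclude
\[
\mathrm{secat}\left(X\to X/G\right)\ \leq\ \mathrm{secat}\left(F(Y,\ell,q)\to F(Y,\ell,q)/G\right),
\]
which is the asserted inequality. The equivalent contrapositive formulation is then immediate. I expect the only genuinely delicate point to be the verification that the square is a pullback (equivalently a quasi pullback) — one must check carefully that $G$-equivariance together with the covering-space structure forces $\varphi$ to be a fiberwise homeomorphism, so that the canonical map $X\to (X/G)\times_{F(Y,\ell,q)/G}F(Y,\ell,q)$ is a homeomorphism; everything else is a direct appeal to results already established in the excerpt.
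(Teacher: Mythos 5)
Your proposal is correct and follows exactly the paper's argument: Proposition~\ref{top-bup} supplies the $G$-equivariant map, Remark~\ref{bup-pullback} makes the induced square a (quasi) pullback of the two $|G|$-sheeted coverings, and Item~(1) of Lemma~\ref{prop-secat-map} gives the inequality, with the hypothesis $2\leq q\leq e_\ell$ ensuring the $G$-action on $F(Y,\ell,q)$ is free so that its quotient map is a fibration. No gaps; this matches the paper's proof.
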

\begin{proof}
    It follows from Proposition~\ref{top-bup} together with Remark~\ref{bup-pullback} and Item (1) of Lemma~\ref{prop-secat-map}. Here, we use that $2\leq q\leq e_\ell$ and thus $G$ acts freely on $F(Y,\ell,q)$; and therefore the quotient map $F(Y,\ell,q)\to F(Y,\ell,q)/G$ is a fibration (see Item (i) from Remark~\ref{trivial-crosssection}). 
\end{proof}

We recall the following statement.

\begin{lemma}\label{lem:lem}\cite[Theorem 15, pg. 97]{schwarz1966} 
  Let $\theta(S,R,G,\pi)$ be a principal fibration with the total space $S$ of connectivity at least $n-2$ and $\beta(E,B,G,p)$ be a principal fibration where $B$ is a paracompact space. If $\text{secat}(p)\leq n$ then there is a $G$-equivariant map $\varphi:E\to S$.
\end{lemma}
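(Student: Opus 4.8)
The plan is to derive this as a dual form of the obstruction-theoretic lifting argument already used in the proof of Proposition~\ref{firs-estimation}, but now reading the inequality $\mathrm{secat}(p)\le n$ as the input rather than a dimension/connectivity comparison. First I would unwind the hypothesis $\mathrm{secat}(p)\le n$: there is an open cover $U_1,\dots,U_n$ of the paracompact base $B$, each $U_i$ admitting a local section $s_i\colon U_i\to E$ of $p$. Because $B$ is paracompact, I can pass to a subordinate partition of unity $\{\lambda_i\}$ and use it to build the classical ``Schwarz join'' construction: the $n$ local sections assemble into a fiberwise map from $B$ into the $n$-fold fiberwise join of $E$ over $B$. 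In the setting of a \emph{principal} $G$-fibration, the fiberwise join $E\ast_B\cdots\ast_B E$ ($n$ factors) is itself a principal $G$-fibration over $B$ whose total space is $(n-2)$-connected (the $n$-fold join of the discrete fiber $G$, done fiberwise, raises connectivity to $n-2$, exactly matching the connectivity hypothesis imposed on $S$).

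The second step is to translate everything to the level of classifying maps. A principal $G$-fibration over $B$ is classified by a map $B\to BG=K(G,1)$ up to homotopy; write $c_\beta\colon B\to BG$ for the classifying map of $\beta$. The existence of local sections over an $n$-element cover is equivalent, after the join construction, to a factorization of $c_\beta$ (up to homotopy) through the total-space-to-base map of the universal example, i.e. through a principal $G$-fibration whose total space is $(n-2)$-connected. On the other hand $\theta(S,R,G,\pi)$ is by hypothesis a principal $G$-fibration with $S$ being $(n-2)$-connected, so the fibration $R\to BG$ (the map $\pi$ followed by the classifying map of $\theta$, or more precisely the Borel-type map $R\simeq S\times_G EG \to BG$) is, up to the connectivity range we need, an $n$-equivalence onto its image: precisely, because the homotopy fiber $S$ of $R\to BG$ is $(n-2)$-connected, the map $R\to BG$ induces isomorphisms on $\pi_j$ for $j\le n-2$ and a surjection on $\pi_{n-1}$. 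Standard obstruction theory then says any map from an $n$-dimensional-enough source — here the map $c_\beta$ restricted through the join total space, whose relevant ``cells'' to lift over appear in dimensions $\le n-1$ — lifts through $R\to BG$.

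The third step is to assemble the $G$-equivariant map. Once I have a lift $B\to R$ (or more precisely $E\ast_B\cdots\ast_B E \to R$) of the classifying map over $BG$, I pull back the principal $G$-bundle $S\to R$ to get a principal $G$-bundle over the join total space together with a $G$-equivariant bundle map of its total space into $S$. Composing with the canonical $G$-equivariant inclusion $E\hookrightarrow E\ast_B\cdots\ast_B E$ (the inclusion of the first join factor, which is $G$-equivariant because the join is fiberwise and $G$ acts diagonally) yields the desired $G$-equivariant map $\varphi\colon E\to S$. The naturality of pullbacks of principal bundles (compare Remark~\ref{bup-pullback}, which records exactly that a $G$-equivariant map between principal $G$-bundles sitting over a map of bases is automatically a pullback) guarantees the map is $G$-equivariant on the nose.

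I expect the main obstacle to be the careful bookkeeping in the obstruction-theory step: one must check that the relevant obstruction and difference classes live in cohomology groups $H^{j}(\,\cdot\,;\pi_{j-1}(S))$ that vanish for the range of $j$ that matters, which forces the exact bookkeeping of the shift ``$(n-2)$-connected $S$ versus cover of cardinality $n$''. The off-by-one/off-by-two matching of connectivity index against join length against the $n$ in $\mathrm{secat}(p)\le n$ has to be done precisely, and the paracompactness hypothesis on $B$ is what lets the partition-of-unity/join construction and the CW-approximation of the classifying maps go through; I would flag where each hypothesis is consumed. This is, modulo that bookkeeping, exactly Schwarz's argument, so beyond the connectivity matching the steps are routine. \qed
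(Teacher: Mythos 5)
First, note that the paper does not prove this lemma at all: it is quoted verbatim from Schwarz (\cite[Theorem 15, p.\ 97]{schwarz1966}), so your attempt is really a reconstruction of Schwarz's argument rather than something to be matched against a proof in the text. Your overall strategy (local sections $\Rightarrow$ join construction $\Rightarrow$ obstruction theory $\Rightarrow$ pull back to get an equivariant map) is indeed the right skeleton, but the step where the quantitative bookkeeping happens is wrong as written. You claim that the $n$-fold fiberwise join $E\ast_B\cdots\ast_B E$ ``is itself a principal $G$-fibration over $B$ whose total space is $(n-2)$-connected.'' That is false: its \emph{fiber} $G\ast\cdots\ast G\simeq$ a wedge of $(n-1)$-spheres is $(n-2)$-connected, but the total space is not (take $E=G\times B$ trivial; the fiberwise join is $G^{\ast n}\times B$, whose connectivity is bounded by that of $B$). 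This error propagates into your obstruction-theory step: you lift the classifying map through $R\to BG$ (an $(n-1)$-equivalence, as you correctly compute) starting from $B$ or from $E\ast_B\cdots\ast_B E$, asserting that ``the relevant cells appear in dimensions $\le n-1$.'' There is no dimension or connectivity hypothesis on $B$ in the lemma, and $E\ast_B\cdots\ast_B E$ has dimension roughly $\dim B+n-1$, so the obstruction groups $H^{j+1}(\,\cdot\,;\pi_j(S))$ need not vanish for the source you name; if this lift from $B$ existed unconditionally one would not need $\mathrm{secat}(p)\le n$ at all.

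The repair is to put the dimension bound where it actually lives: on the join of the \emph{fibers}, not the fiberwise join over $B$. Since $p$ is principal with fiber $G$, the fiberwise join is the associated bundle $E\times_G G^{\ast n}$, so your section (built from the local sections and a partition of unity, using paracompactness of $B$) is equivalent to a $G$-equivariant map $E\to G^{\ast n}$ — this is exactly Schwarz's characterization $\mathrm{secat}(p)\le n$ iff such a $G$-map exists, and it is the same mechanism as the Milnor-stage factorization $B\to (G^{\ast n})/G\to BG$ you allude to. Now $G^{\ast n}$ is a \emph{free} $G$-CW complex of dimension $n-1$, and $S$ is $(n-2)$-connected, so equivariant obstruction theory (extend cell by cell over free $G$-cells; the obstruction over a $j$-cell lies in $\pi_{j-1}(S)=0$ for $j\le n-1$) produces a $G$-map $G^{\ast n}\to S$; equivalently, lift $(G^{\ast n})/G\to BG$ through $R\to BG$, which is legitimate precisely because $(G^{\ast n})/G$ is $(n-1)$-dimensional. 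The composite $E\to G^{\ast n}\to S$ is the desired $\varphi$; no inclusion of a first join factor $E\hookrightarrow E\ast_B\cdots\ast_B E$ is needed, and indeed that inclusion composed with your construction does not produce the map you want. With this correction your argument becomes the standard proof of Schwarz's theorem; as written, the connectivity claim and the misplaced obstruction computation are a genuine gap.
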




In particular, under certain conditions, we obtain a characterization of the $q$-th BUP in terms of sectional category. 

\begin{theorem}\label{thm:q-bup-scat}
    Let $G$ be a finite group with order $|G|=\ell$, and $2\leq q\leq e_\ell$. Suppose that $X$ is a paracompact free $G$-space and $Y$ is a Hausdorff space such that the connectivity of $F(Y,\ell,q)$ is at least $\mathrm{secat}\left(F(Y,\ell,q)\to F(Y,\ell,q)/G\right)-2$. We have that the following statements are equivalent.
   \begin{enumerate}
       \item[(i)] $\mathrm{secat}\left(X\to X/G\right)> \mathrm{secat}\left(F(Y,\ell,q)\to F(Y,\ell,q)/G\right)$.
        \item[(ii)] the triple $\left(X,G;Y\right)$ satisfies the $q$-th BUP.
   \end{enumerate}
\end{theorem}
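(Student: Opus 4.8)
\textbf{Proof plan for Theorem~\ref{thm:q-bup-scat}.} The plan is to establish the two implications separately, using the tools already assembled in the excerpt. The implication (i)$\Rightarrow$(ii) is immediate: it is precisely the contrapositive form of the conclusion of Theorem~\ref{bup-secat}, and the hypotheses $2\leq q\leq e_\ell$, $X$ a free $G$-space, $Y$ Hausdorff are all in force, so no extra work is needed there. The substance of the theorem is the reverse implication (ii)$\Rightarrow$(i), and for this I would argue by contraposition: assume $\mathrm{secat}\left(X\to X/G\right)\leq \mathrm{secat}\left(F(Y,\ell,q)\to F(Y,\ell,q)/G\right)$ and produce a $G$-equivariant map $X\to F(Y,\ell,q)$, which by Proposition~\ref{top-bup} shows that $\left(X,G;Y\right)$ fails the $q$-th BUP.

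To build that equivariant map I would invoke Lemma~\ref{lem:lem} with the following dictionary: take $\theta(S,R,G,\pi)$ to be the principal fibration $F(Y,\ell,q)\to F(Y,\ell,q)/G$ (this is a genuine principal $G$-fibration because $2\leq q\leq e_\ell$ forces $G$ to act freely on $F(Y,\ell,q)$, by the remark preceding Definition~\ref{defn-index} and the argument in Proposition~\ref{firs-estimation}), and take $\beta(E,B,G,p)$ to be the covering $X\to X/G$, whose base $X/G$ is paracompact since $X$ is assumed paracompact and the covering is a locally trivial bundle with discrete fiber. Set $n=\mathrm{secat}\left(F(Y,\ell,q)\to F(Y,\ell,q)/G\right)$. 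The hypothesis that the connectivity of $F(Y,\ell,q)$ is at least $n-2$ is exactly the connectivity condition \aspas{total space of connectivity at least $n-2$} required by Lemma~\ref{lem:lem}, and the standing assumption $\mathrm{secat}(X\to X/G)\leq n$ is the hypothesis $\mathrm{secat}(p)\leq n$. Lemma~\ref{lem:lem} then yields a $G$-equivariant map $\varphi:X\to F(Y,\ell,q)$, and Proposition~\ref{top-bup} converts this directly into the statement that $\left(X,G;Y\right)$ does not satisfy the $q$-th BUP. Contraposing, (ii) implies (i).

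The main obstacle is not any single deep computation but rather the careful verification that all the side hypotheses of Lemma~\ref{lem:lem} are genuinely met in this setting: that $F(Y,\ell,q)\to F(Y,\ell,q)/G$ really is a \emph{principal} $G$-fibration in Schwarz's sense (freeness of the action, plus the fibration property, which comes from Remark~\ref{trivial-crosssection}(i)); that $X/G$ is paracompact (one needs that paracompactness of $X$ passes to the quotient under a finite covering — this is standard but should be noted); and the bookkeeping that matches \aspas{connectivity at least $n-2$} in the lemma with the hypothesis as stated. A secondary subtlety worth a sentence is that Lemma~\ref{lem:lem} is stated with the roles of \aspas{source} and \aspas{target} in a fixed order, so one must be sure the equivariant map goes $X\to F(Y,\ell,q)$ and not the other way — it does, since $X\to X/G$ plays the role of $\beta(E,B,G,p)$ whose total space $E$ is the domain of the produced map $\varphi:E\to S$. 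Once these are in place the proof is essentially a one-line application of Lemma~\ref{lem:lem} sandwiched between two invocations of Proposition~\ref{top-bup}, with Theorem~\ref{bup-secat} supplying the easy direction.
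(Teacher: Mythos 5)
Your proposal is correct and follows essentially the same route as the paper: the implication (i)$\Rightarrow$(ii) is exactly Theorem~\ref{bup-secat}, and the converse is obtained, as in the paper, by applying Schwarz's Lemma~\ref{lem:lem} (with $S=F(Y,\ell,q)$, $E\to B$ the covering $X\to X/G$, and $n=\mathrm{secat}\left(F(Y,\ell,q)\to F(Y,\ell,q)/G\right)$) to produce a $G$-equivariant map $X\to F(Y,\ell,q)$, which Proposition~\ref{top-bup} translates into failure of the $q$-th BUP. Your additional checks (freeness of the action for $q\leq e_\ell$, paracompactness of $X/G$, the connectivity hypothesis matching the lemma) are exactly the side conditions the paper leaves implicit, so the write-up is a faithful, more detailed version of the paper's argument.
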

\begin{proof}
  (i) implies (ii) follows from Theorem~\ref{bup-secat}. The converse follows from Item (iii) of Lemma~\ref{lem:lem} and Proposition \ref{top-bup}. 
\end{proof}

As noted by Karasev-Volovikov in \cite[p. 1039]{karasev2011} the sectional category coincidences with the fixed point free genus in the sense of \cite[p. 1039]{karasev2011}. Hence, from \cite[Theorem 5.2, p. 1043]{karasev2011} we have the following result.

\begin{proposition}\label{compu-secat-conf}
    Let $2\leq q\leq p$, with $p$ prime. Then \[\mathrm{secat}\left(F(\mathbb{R}^n,p,q)\to F(\mathbb{R}^n,p,q)/\mathbb{Z}_p\right)\leq(n-1)(p-1)+q-1, \text{ for any $n\geq 1$}.\]
\end{proposition}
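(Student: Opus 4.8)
The plan is to reduce the statement to the Karasev--Volovikov computation of the fixed point free genus cited as \cite[Theorem 5.2, p. 1043]{karasev2011}, using the identification (noted just above) that the sectional category of a principal fibration coincides with the fixed point free genus of the associated action. First I would recall that, since $2\leq q\leq p$ and $p$ is prime, we have $q\leq e_p=p$, so $\mathbb{Z}_p$ acts freely on $F(\mathbb{R}^n,p,q)$; hence the quotient map $F(\mathbb{R}^n,p,q)\to F(\mathbb{R}^n,p,q)/\mathbb{Z}_p$ is a principal $\mathbb{Z}_p$-fibration (by Item (i) of Remark~\ref{trivial-crosssection}), and it makes sense to speak of its sectional category.

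Next I would translate between the two bookkeeping conventions. Schwarz's genus (our $\mathrm{secat}$) counts the minimal cardinality of an open cover by sets admitting local sections, while the fixed point free genus of \cite{karasev2011} is typically normalized so that the two differ by $1$ (a single contractible-to-a-section piece corresponds to genus $1$ versus $g$-invariant $0$); one must check which normalization Karasev--Volovikov use so that the stated bound comes out with the additive constant $q-1$ and not $q$ or $q-2$. With that convention fixed, \cite[Theorem 5.2]{karasev2011} gives an upper bound for the genus of the $\mathbb{Z}_p$-action on $F(\mathbb{R}^n,p,q)$, and the claimed inequality $\mathrm{secat}(\cdots)\leq (n-1)(p-1)+q-1$ is exactly the transcription of that bound under the identification above. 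So the proof is essentially: \emph{cite Proposition that genus $=$ secat here}, \emph{cite the numerical bound}, \emph{reconcile the normalizations}, done.

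An alternative, more self-contained route would avoid \cite{karasev2011} and instead bound $\mathrm{secat}$ directly via obstruction theory, as in the proof of Proposition~\ref{firs-estimation}: by Remark~\ref{rem:index}(4) the space $F(\mathbb{R}^n,p,q)$ is $\big((n-1)(q-1)-2\big)$-connected (replacing $k+1$ by $n$), and one can try to cover the base $F(\mathbb{R}^n,p,q)/\mathbb{Z}_p$ by pieces over which the classifying map to $B\mathbb{Z}_p$ lifts, using that the dimension of the configuration-like space is controlled. Combined with Item (iv) and Item (iii) of Lemma~\ref{prop-secat-map} (the LS-category estimate for highly connected complexes), this yields an estimate of the form $\mathrm{secat}\leq \mathrm{hdim}/(\text{conn}+1)+1$, but matching the precise constant $(n-1)(p-1)+q-1$ this way requires knowing the homotopical dimension of $F(\mathbb{R}^n,p,q)/\mathbb{Z}_p$ sharply, which is precisely what makes the direct route delicate.

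The main obstacle I expect is the bookkeeping with the normalization constant: getting the additive term to be exactly $q-1$ (rather than off by one) requires care in translating \cite[Theorem 5.2, p. 1043]{karasev2011} into our conventions, since the fixed point free genus, the Schwarz genus, and $\mathrm{secat}$ as defined in this paper each shift by $1$ depending on whether one counts covers, indices, or numbers of pieces. Once that is pinned down, the rest is a direct citation; no genuinely new argument is needed beyond Proposition~\ref{top-bup}-style equivariant-map reformulations already available in the excerpt.
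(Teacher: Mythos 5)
Your main route is exactly the paper's proof: the paper simply notes that $\mathrm{secat}$ of the quotient map coincides with the fixed point free genus in the sense of Karasev--Volovikov and then quotes \cite[Theorem 5.2, p. 1043]{karasev2011}, with the freeness of the $\mathbb{Z}_p$-action on $F(\mathbb{R}^n,p,q)$ (since $q\leq p$) making the quotient a fibration. The normalization worry you raise is resolved by the paper's convention that the two invariants coincide outright (both count the number of open sets), so the bound $(n-1)(p-1)+q-1$ transfers with no shift.
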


Unfortunately in general the sectional category of the fibration $F(\mathbb{R}^n,p,q)\to F(\mathbb{R}^n,p,q)/\mathbb{Z}_p$ has not yet been determined for all $n$ and $2\leq q\leq p$, with $p$ prime. However, from \cite{karasev2011} we have the following remark.
\begin{remark}\label{rem:compu-secat-conf-like}
  Let $2\leq q\leq p$, with $p$ prime. If $q>p/2$ or $q=2$ then \[\mathrm{secat}\left(F(\mathbb{R}^n,p,q)\to F(\mathbb{R}^n,p,q)/\mathbb{Z}_p\right)=(n-1)(p-1)+q-1.\] Indeed, the case $q>p/2$ follows from \cite[Theorem 5.5, p. 1045]{karasev2011}. The case $q=2$ follows from Proposition~\ref{compu-secat-conf} together with \cite[Lemma 5.9, p. 1047]{karasev2011}.  
\end{remark}

Theorem~\ref{bup-secat} together with Proposition~\ref{compu-secat-conf} imply the following result which presents an upper bound for sectional category in terms of the index.

\begin{theorem}\label{theorem:general-secat-index}
  Let $2\leq q\leq p$, with $p$ prime. Suppose that $X$ is a free $\mathbb{Z}_p$-space and $n_q=\mathrm{ind}_q(X,\mathbb{Z}_p)$. Then \[\mathrm{secat}(X\to X/\mathbb{Z}_p)\leq n_q(p-1)+q-1.\] 
\end{theorem}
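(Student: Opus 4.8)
The plan is to combine the topological criterion for the $q$-th BUP (Proposition~\ref{top-bup}) with the monotonicity of sectional category under (quasi) pullbacks, exactly as in Theorem~\ref{bup-secat}, and then to plug in the known upper bound on the sectional category of the relevant configuration-like fibration given by Proposition~\ref{compu-secat-conf}. Concretely, set $n_q = \mathrm{ind}_q(X,\mathbb{Z}_p)$. If $n_q = \infty$ there is nothing to prove, so assume $n_q$ is finite. By the very definition of the $q$-th index (Definition~\ref{defn-index}), there exists a $\mathbb{Z}_p$-equivariant map $X \to F(\mathbb{R}^{n_q+1}, p, q)$.

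First I would observe that $2 \leq q \leq p$ with $p$ prime forces $e_\ell = p$, so $q \leq e_\ell$ and hence $\mathbb{Z}_p$ acts freely on $F(\mathbb{R}^{n_q+1}, p, q)$; thus the quotient map $F(\mathbb{R}^{n_q+1},p,q) \to F(\mathbb{R}^{n_q+1},p,q)/\mathbb{Z}_p$ is a fibration (Item (i) of Remark~\ref{trivial-crosssection}) and it makes sense to speak of its sectional category. Next, by Remark~\ref{bup-pullback}, the square
\begin{eqnarray*}
\xymatrix{
X \ar[r] \ar[d] & F(\mathbb{R}^{n_q+1},p,q) \ar[d] \\
X/\mathbb{Z}_p \ar[r] & F(\mathbb{R}^{n_q+1},p,q)/\mathbb{Z}_p
}
\end{eqnarray*}
induced by the equivariant map above is a pullback (in particular a quasi pullback), so Item (1) of Lemma~\ref{prop-secat-map} gives
\[
\mathrm{secat}(X \to X/\mathbb{Z}_p) \leq \mathrm{secat}\big(F(\mathbb{R}^{n_q+1},p,q) \to F(\mathbb{R}^{n_q+1},p,q)/\mathbb{Z}_p\big).
\]
Equivalently, this last inequality is just the contrapositive form of Theorem~\ref{bup-secat} applied with $Y = \mathbb{R}^{n_q+1}$, using that $(X,\mathbb{Z}_p;\mathbb{R}^{n_q+1})$ satisfies the $q$-th BUP (Item (1) of Remark~\ref{rem:index}).

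Finally I would apply Proposition~\ref{compu-secat-conf} with $n = n_q + 1$, which yields
\[
\mathrm{secat}\big(F(\mathbb{R}^{n_q+1},p,q) \to F(\mathbb{R}^{n_q+1},p,q)/\mathbb{Z}_p\big) \leq (n_q+1-1)(p-1) + q - 1 = n_q(p-1) + q - 1.
\]
Chaining the two inequalities gives $\mathrm{secat}(X \to X/\mathbb{Z}_p) \leq n_q(p-1) + q - 1$, as desired. I do not anticipate a serious obstacle here: the argument is essentially bookkeeping, assembling results already established in the excerpt. The only point requiring a little care is the boundary case $n_q = 0$, where $F(\mathbb{R}^1, p, q)$ must still carry a free $\mathbb{Z}_p$-action and Proposition~\ref{compu-secat-conf} must apply with $n = 1$ — but the hypotheses of that proposition explicitly allow $n \geq 1$, so this is fine, and the case $n_q = \infty$ is handled trivially as noted above.
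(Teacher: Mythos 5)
Your proposal is correct and takes essentially the same route as the paper: the finiteness reduction, the equivariant map $X\to F(\mathbb{R}^{n_q+1},p,q)$ from Definition~\ref{defn-index}, the pullback/quasi-pullback comparison (i.e.\ Theorem~\ref{bup-secat}), and then Proposition~\ref{compu-secat-conf} with $n=n_q+1$. One small wording slip in your aside: invoking Theorem~\ref{bup-secat} with $Y=\mathbb{R}^{n_q+1}$ uses that the triple $\left(X,\mathbb{Z}_p;\mathbb{R}^{n_q+1}\right)$ does \emph{not} satisfy the $q$-th BUP (which by Proposition~\ref{top-bup} is exactly the existence of the equivariant map), not that it does; your main pullback argument is unaffected.
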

\begin{proof}
  If $n_q=\infty$ the inequality is obvious. So, we suppose that $n_q<\infty$. By Definition~\ref{defn-index}, we obtain that there exists a $\mathbb{Z}_p$-equivariant map $X\to F(\mathbb{R}^{n_q+1},p,q)$. Then, \begin{align*}
      \mathrm{secat}(X\to X/\mathbb{Z}_p)&\leq \mathrm{secat}\left(F(\mathbb{R}^{n_q+1},p,q)\right)& \hbox{(by Theorem~\ref{bup-secat})}\\
      &\leq n_q(p-1)+q-1&\hbox{(by Proposition~\ref{compu-secat-conf})}.
  \end{align*}
\end{proof}

A direct consequence of Theorem~\ref{theorem:general-secat-index} is the following result.

\begin{proposition}\label{secat-q-index-1}
  Let $2\leq q\leq p$, with $p$ prime. Suppose that $X$ is a free $\mathbb{Z}_p$-space. If $\mathrm{secat}(X\to X/\mathbb{Z}_p)\geq q$ then  $\mathrm{ind}_q(X,\mathbb{Z}_p)\geq 1$. 
\end{proposition}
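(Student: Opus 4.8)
The plan is to prove the contrapositive: assuming $\mathrm{ind}_q(X,\mathbb{Z}_p)=0$, I will show $\mathrm{secat}(X\to X/\mathbb{Z}_p)\leq q-1$. By Definition~\ref{defn-index}, $\mathrm{ind}_q(X,\mathbb{Z}_p)=0$ means there exists a $\mathbb{Z}_p$-equivariant map $X\to F(\mathbb{R}^{1},p,q)=F(\mathbb{R},p,q)$. This is exactly the hypothesis needed to invoke Theorem~\ref{theorem:general-secat-index} with $n_q=0$: it yields
\[
\mathrm{secat}(X\to X/\mathbb{Z}_p)\leq 0\cdot(p-1)+q-1 = q-1.
\]
Contrapositively, if $\mathrm{secat}(X\to X/\mathbb{Z}_p)\geq q$, then $\mathrm{ind}_q(X,\mathbb{Z}_p)\geq 1$, which is the claim.

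More directly, one simply specializes Theorem~\ref{theorem:general-secat-index}: write $n_q=\mathrm{ind}_q(X,\mathbb{Z}_p)$, so that $\mathrm{secat}(X\to X/\mathbb{Z}_p)\leq n_q(p-1)+q-1$. If $n_q=0$ the right-hand side is $q-1<q$, contradicting the hypothesis $\mathrm{secat}(X\to X/\mathbb{Z}_p)\geq q$. Hence $n_q\geq 1$. (Note there is no issue with $n_q=\infty$, since we only need to rule out $n_q=0$.)

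There is essentially no obstacle here; the statement is a formal corollary of Theorem~\ref{theorem:general-secat-index}, obtained by reading off the case $q$-th index equal to zero. The only thing worth spelling out is the translation between the index being $0$ and the existence of a $\mathbb{Z}_p$-equivariant map into $F(\mathbb{R},p,q)$, which is immediate from Definition~\ref{defn-index}, together with the observation that Theorem~\ref{bup-secat} (used inside the proof of Theorem~\ref{theorem:general-secat-index}) applies because $2\leq q\leq p=e_p$, so $\mathbb{Z}_p$ acts freely on $F(\mathbb{R},p,q)$ and the quotient map is a fibration.

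I would therefore write the proof in a single short paragraph: invoke Theorem~\ref{theorem:general-secat-index} with $n_q=\mathrm{ind}_q(X,\mathbb{Z}_p)$ to get $\mathrm{secat}(X\to X/\mathbb{Z}_p)\leq n_q(p-1)+q-1$, then observe that $\mathrm{secat}(X\to X/\mathbb{Z}_p)\geq q$ forces $n_q(p-1)\geq 1$, hence $n_q\geq 1$ since $p-1\geq 1$.
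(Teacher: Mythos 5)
Your proof is correct and is essentially the paper's own argument: the paper presents this proposition as a direct consequence of Theorem~\ref{theorem:general-secat-index}, exactly the inequality $\mathrm{secat}(X\to X/\mathbb{Z}_p)\leq n_q(p-1)+q-1$ that you specialize at $n_q=0$. Your extra remarks (handling $n_q=\infty$, the fibration/free-action check) are fine but not needed beyond what the cited theorem already guarantees.
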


Let $G$ be a finite group with order $|G|=\ell$, $2\leq q\leq \ell$. Note that, the space $F(\mathbb{R}^n,\ell,q)$ is a complement to a system of $(\ell-q+1)n$-dimensional linear subspaces in $\mathbb{R}^{\ell n}$. Thus, by \cite[p. 72]{guillemin1974differential}, it is $((q-1)n-2)$-connected. Hence, Lemma~\ref{lem:lem} implies the following statement.

\begin{proposition}\label{prop-lower-bound}
Let $G$ be a finite group with order $|G|=\ell$, $2\leq q\leq e_\ell$. Suppose that $X$ is a paracompact free $G$-space. If $\mathrm{secat}\left(X\to X/G\right)\leq n(q-1)$ then the triple $\left(X,G;\mathbb{R}^n\right)$ does not satisfy the $q$-th BUP. Equivalently, if the triple $\left(X,G;\mathbb{R}^n\right)$ satisfies the $q$-th BUP then $\mathrm{secat}\left(X\to X/G\right)\geq n(q-1)+1$.
\end{proposition}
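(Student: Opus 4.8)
The plan is to use the characterization of the $q$-th BUP via $G$-equivariant maps (Proposition~\ref{top-bup}) together with the connectivity bound for $F(\mathbb{R}^n,\ell,q)$ and Lemma~\ref{lem:lem}. The contrapositive formulation is the one I would actually prove: assuming $\mathrm{secat}\left(X\to X/G\right)\leq n(q-1)$, I want to produce a $G$-equivariant map $X\to F(\mathbb{R}^n,\ell,q)$, which by Proposition~\ref{top-bup} says exactly that $\left(X,G;\mathbb{R}^n\right)$ does not satisfy the $q$-th BUP.

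First I would set up the two principal fibrations to which Lemma~\ref{lem:lem} applies. On one side, take $S = F(\mathbb{R}^n,\ell,q)$ with the free $G$-action (free precisely because $q\leq e_\ell$, as recalled before Proposition~\ref{top-bup}), so that $F(\mathbb{R}^n,\ell,q)\to F(\mathbb{R}^n,\ell,q)/G$ is a principal $G$-fibration; as noted in the paragraph preceding this proposition, $F(\mathbb{R}^n,\ell,q)$ is the complement of a system of linear subspaces of codimension $(q-1)n$ in $\mathbb{R}^{\ell n}$, hence is $((q-1)n-2)$-connected by \cite[p. 72]{guillemin1974differential}. So with $N := n(q-1)$, the total space $S$ has connectivity at least $N-2$. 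On the other side, take $E = X$, $B = X/G$, $p$ the quotient map, which is a principal $G$-fibration by Remark~\ref{trivial-crosssection}(i)--(ii), with $B = X/G$ paracompact since $X$ is paracompact and the action is by a finite group (the quotient of a paracompact space by a finite group is paracompact).

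Now I apply Lemma~\ref{lem:lem}: its hypotheses are that the total space $S$ of the first principal fibration has connectivity at least $N-2$, that $B$ in the second principal fibration is paracompact, and that $\mathrm{secat}(p)\leq N$. All three hold — the first by the connectivity computation with $N = n(q-1)$, the second by paracompactness of $X/G$, and the third is exactly our hypothesis $\mathrm{secat}\left(X\to X/G\right)\leq n(q-1)$. The conclusion of Lemma~\ref{lem:lem} is then a $G$-equivariant map $\varphi\colon X\to F(\mathbb{R}^n,\ell,q)$. By Proposition~\ref{top-bup} (applied with $Y=\mathbb{R}^n$), the existence of such a map is equivalent to $\left(X,G;\mathbb{R}^n\right)$ not satisfying the $q$-th BUP. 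This proves the first assertion, and the ``equivalently'' statement is just its contrapositive: if $\left(X,G;\mathbb{R}^n\right)$ satisfies the $q$-th BUP, then no $G$-equivariant map $X\to F(\mathbb{R}^n,\ell,q)$ exists, so $\mathrm{secat}\left(X\to X/G\right)> n(q-1)$, i.e. $\mathrm{secat}\left(X\to X/G\right)\geq n(q-1)+1$.

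The only genuine point requiring care — the ``main obstacle'' such as it is — is making sure the version of Lemma~\ref{lem:lem} actually being invoked matches what is needed: the cited \cite[Theorem 15, pg. 97]{schwarz1966} in the form stated here refers to ``Item (iii)'' in the proof of Theorem~\ref{thm:q-bup-scat}, so I would double-check that the statement used is precisely ``$\mathrm{secat}(p)\leq n$ and $S$ is $(n-2)$-connected imply a $G$-map $E\to S$,'' with $E\to B$ a principal $G$-fibration over a paracompact base. Everything else is a matter of correctly identifying $n(q-1)$ with the ``$n$'' appearing in Lemma~\ref{lem:lem} and recording the connectivity and paracompactness bookkeeping; no nontrivial estimate or construction beyond these is needed.
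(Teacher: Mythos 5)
Your argument is correct and is essentially the paper's own: the paper deduces Proposition~\ref{prop-lower-bound} directly from the $((q-1)n-2)$-connectivity of $F(\mathbb{R}^n,\ell,q)$, Lemma~\ref{lem:lem} applied with $n(q-1)$ in place of the lemma's $n$, and Proposition~\ref{top-bup}, exactly as you do. Your additional bookkeeping (freeness of the action since $q\leq e_\ell$, paracompactness of $X/G$) just makes explicit what the paper leaves implicit.
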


Proposition~\ref{prop-lower-bound} implies the following result which presents a lower bound for sectional category in terms of the index.

\begin{theorem}\label{thm:lower-bound-index}
    Let $G$ be a finite group with order $|G|=\ell$, $2\leq q\leq e_\ell$. Suppose that $X$ is a paracompact free $G$-space. Set $n_q=\mathrm{ind}_q\left(X,G\right)<\infty$. Then 
    \[n_q(q-1)+1\leq\mathrm{secat}\left(X\to X/G\right).\]
\end{theorem}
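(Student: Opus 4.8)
The plan is to deduce Theorem~\ref{thm:lower-bound-index} directly from Proposition~\ref{prop-lower-bound} by unwinding the definition of the index. The key observation is that, by definition, $n_q = \mathrm{ind}_q(X,G)$ is the greatest integer $n \geq 0$ for which $(X,G;\mathbb{R}^n)$ satisfies the $q$-th BUP (this is Item (1) of Remark~\ref{rem:index}, combined with Proposition~\ref{top-bup}). Since we are assuming $n_q < \infty$, this maximum is attained: the triple $(X,G;\mathbb{R}^{n_q})$ satisfies the $q$-th BUP. That is the only input I need.

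First I would invoke Proposition~\ref{prop-lower-bound} with $n = n_q$. The hypotheses of that proposition are exactly the standing hypotheses here: $G$ is finite of order $\ell$, $2 \leq q \leq e_\ell$, and $X$ is a paracompact free $G$-space. Its contrapositive form states that if $(X,G;\mathbb{R}^n)$ satisfies the $q$-th BUP, then $\mathrm{secat}(X \to X/G) \geq n(q-1)+1$. Applying this with $n = n_q$, and using that $(X,G;\mathbb{R}^{n_q})$ does satisfy the $q$-th BUP by the previous paragraph, yields immediately
\[
\mathrm{secat}(X \to X/G) \geq n_q(q-1) + 1,
\]
which is the claimed inequality. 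Nothing further is required.

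Since the whole argument is a one-line application of an already-established proposition, there is essentially no obstacle; the only point that warrants care is the justification that $(X,G;\mathbb{R}^{n_q})$ genuinely satisfies the $q$-th BUP when $n_q < \infty$. This is where one must be a little careful about the indexing convention in Definition~\ref{defn-index}: there $\mathrm{ind}_q(X,G)$ is phrased as the least $k$ admitting a $G$-equivariant map $X \to F(\mathbb{R}^{k+1},\ell,q)$, and one needs Remark~\ref{rem:index}(1) to translate this into "greatest $n$ with the $q$-th BUP for $\mathbb{R}^n$" — the shift by one is precisely the content of that remark, via Proposition~\ref{top-bup}. Once that translation is in hand, the BUP for $\mathbb{R}^{n_q}$ is automatic (a $G$-equivariant map $X \to F(\mathbb{R}^{n_q+1},\ell,q)$ fails to exist, so by Proposition~\ref{top-bup} the triple $(X,G;\mathbb{R}^{n_q})$ satisfies the $q$-th BUP), and the theorem follows.
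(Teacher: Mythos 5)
Your argument is correct and is essentially the paper's own proof: by the definition of the index (Remark~\ref{rem:index}(1), via Proposition~\ref{top-bup}), the triple $(X,G;\mathbb{R}^{n_q})$ satisfies the $q$-th BUP when $n_q<\infty$, and Proposition~\ref{prop-lower-bound} applied with $n=n_q$ gives $\mathrm{secat}(X\to X/G)\geq n_q(q-1)+1$. One slip in your closing parenthetical: what fails to exist is a $G$-equivariant map $X\to F(\mathbb{R}^{n_q},\ell,q)$ (take $k=n_q-1$ in Definition~\ref{defn-index}, the case $n_q=0$ being trivial); a $G$-equivariant map $X\to F(\mathbb{R}^{n_q+1},\ell,q)$ \emph{does} exist, since $n_q$ is the least such $k$ and is attained when $n_q<\infty$ --- this does not affect the proof, whose first paragraph already justifies the needed BUP correctly.
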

\begin{proof}
    By definition of index, the triple $\left(X,G;\mathbb{R}^{n_q}\right)$ satisfies the $q$-th BUP. Then, by Proposition~\ref{prop-lower-bound}, we obtain that $\mathrm{secat}\left(X\to X/G\right)\geq n_q(q-1)+1$.
\end{proof}

A direct application of Theorem~\ref{thm:lower-bound-index} says that lower sectional category implies lower index. 

\begin{proposition}\label{prop:lower-secat-index}
       Let $G$ be a finite group with order $|G|=\ell$, $2\leq q\leq e_\ell$. Suppose that $X$ is a paracompact free $G$-space and $\mathrm{ind}_q\left(X,G\right)<\infty$.
       \begin{enumerate}
           \item[(1)] If  $\mathrm{secat}\left(X\to X/G\right)=2$ then $\mathrm{ind}_q\left(X,G\right)\in\{0,1\}$. In addition, $\mathrm{ind}_q\left(X,G\right)=0$ whenever $q\geq 3$.
           \item[(2)] If  $\mathrm{secat}\left(X\to X/G\right)=3$ then $\mathrm{ind}_q\left(X,G\right)\in\{0,1,2\}$. In addition, $\mathrm{ind}_3\left(X,G\right)\in\{0,1\}$ and $\mathrm{ind}_q\left(X,G\right)=0$ whenever $q\geq 4$.
       \end{enumerate} 
\end{proposition}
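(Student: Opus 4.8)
The plan is to derive everything directly from Theorem~\ref{thm:lower-bound-index}, which gives the inequality $n_q(q-1)+1\le \mathrm{secat}\left(X\to X/G\right)$ for $n_q=\mathrm{ind}_q(X,G)$ under the standing hypotheses ($G$ finite of order $\ell$, $2\le q\le e_\ell$, $X$ a paracompact free $G$-space, and $\mathrm{ind}_q(X,G)<\infty$). So the proof is just a matter of solving this linear inequality for $n_q$ in the two cases $\mathrm{secat}=2$ and $\mathrm{secat}=3$, and then specializing the value of $q$ inside the resulting bound.

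First I would treat part (1). From $n_q(q-1)+1\le 2$ we get $n_q(q-1)\le 1$. Since $q\ge 2$, we have $q-1\ge 1$, hence $n_q\le \tfrac{1}{q-1}\le 1$, so $n_q\in\{0,1\}$ because $n_q$ is a non-negative integer. Moreover, if $q\ge 3$ then $q-1\ge 2$, so $n_q(q-1)\le 1$ forces $n_q=0$. This gives both assertions of (1).

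Next I would treat part (2) the same way. From $n_q(q-1)+1\le 3$ we get $n_q(q-1)\le 2$. With $q\ge 2$ this yields $n_q\le \tfrac{2}{q-1}\le 2$, so $n_q\in\{0,1,2\}$. If $q=3$ then $q-1=2$, so $n_q\cdot 2\le 2$ gives $n_q\le 1$, i.e. $\mathrm{ind}_3(X,G)\in\{0,1\}$. If $q\ge 4$ then $q-1\ge 3>2$, so $n_q(q-1)\le 2$ forces $n_q=0$. This establishes (2).

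I do not anticipate a genuine obstacle here: the content is entirely contained in Theorem~\ref{thm:lower-bound-index}, and the remainder is an elementary case analysis on a linear inequality over the non-negative integers. The only point requiring a little care is to make sure the hypotheses of Theorem~\ref{thm:lower-bound-index} are exactly the ones assumed in Proposition~\ref{prop:lower-secat-index} (they are: paracompact free $G$-space, $2\le q\le e_\ell$, and finiteness of the index), so that the inequality may be invoked verbatim; after that the bookkeeping on $q$ is immediate.
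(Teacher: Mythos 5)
Your proposal is correct and is essentially the paper's argument: the paper states this proposition as a direct application of Theorem~\ref{thm:lower-bound-index}, and your case analysis of the inequality $n_q(q-1)+1\le \mathrm{secat}(X\to X/G)$ for $\mathrm{secat}=2,3$ is exactly the intended elementary bookkeeping.
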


For instance, recall that the connectivity of $F(\mathbb{R}^n,p,p)$ is equal to $n(p-1)-2$ and $\mathrm{secat}\left(F(\mathbb{R}^n,p,p)\to F(\mathbb{R}^n,p,p)/\mathbb{Z}_p\right)=n(p-1)$ (see Proposition~\ref{compu-secat-conf} or \cite[Proposition 55, p. 122]{schwarz1966}). Thus, Theorem~\ref{thm:q-bup-scat} imply the following statement. 

\begin{proposition}\cite[Theorem 30, p. 135]{schwarz1966}\label{prop:p-bup-secat}
     Suppose that $X$ is a paracompact free $\mathbb{Z}_p$-space. We have that the following statements are equivalent.
   \begin{enumerate}
       \item[(i)] $\mathrm{secat}\left(X\to X/\mathbb{Z}_p\right)> n(p-1)$.
        \item[(ii)] the triple $\left((X,\mathbb{Z}_p);\mathbb{R}^n\right)$ satisfies the $p$-th BUP.
   \end{enumerate}
\end{proposition}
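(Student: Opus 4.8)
The plan is to derive Proposition~\ref{prop:p-bup-secat} as the special case $G=\mathbb{Z}_p$, $q=p$, $Y=\mathbb{R}^n$ of Theorem~\ref{thm:q-bup-scat}, so that the whole argument reduces to checking that the hypotheses of that theorem are met in the present situation. First I would observe that for $G=\mathbb{Z}_p$ with $p$ prime one has $\ell=|G|=p$ and $e_\ell=p$, so the admissible range $2\le q\le e_\ell$ contains the value $q=p$; consequently $\mathbb{Z}_p$ acts freely on $F(\mathbb{R}^n,p,p)$ and, by Item~(i) of Remark~\ref{trivial-crosssection}, the quotient map $F(\mathbb{R}^n,p,p)\to F(\mathbb{R}^n,p,p)/\mathbb{Z}_p$ is a fibration, so that its sectional category is defined.

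Second, I would verify the connectivity hypothesis of Theorem~\ref{thm:q-bup-scat}, namely that the connectivity of $F(\mathbb{R}^n,p,p)$ is at least $\mathrm{secat}\bigl(F(\mathbb{R}^n,p,p)\to F(\mathbb{R}^n,p,p)/\mathbb{Z}_p\bigr)-2$. This is exactly the content of the paragraph preceding the statement: $F(\mathbb{R}^n,p,p)$, being the complement of a system of linear subspaces in $\mathbb{R}^{pn}$, is $\bigl(n(p-1)-2\bigr)$-connected by \cite[p.~72]{guillemin1974differential}, while $\mathrm{secat}\bigl(F(\mathbb{R}^n,p,p)\to F(\mathbb{R}^n,p,p)/\mathbb{Z}_p\bigr)=n(p-1)$ by Proposition~\ref{compu-secat-conf} (equivalently \cite[Proposition~55, p.~122]{schwarz1966}). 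Hence the connectivity equals $n(p-1)-2=\mathrm{secat}-2$, so the required inequality holds (with equality).

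With these checks in place, Theorem~\ref{thm:q-bup-scat} applies directly with $X$ the given paracompact free $\mathbb{Z}_p$-space and $Y=\mathbb{R}^n$: its condition~(i) reads $\mathrm{secat}(X\to X/\mathbb{Z}_p)>\mathrm{secat}\bigl(F(\mathbb{R}^n,p,p)\to F(\mathbb{R}^n,p,p)/\mathbb{Z}_p\bigr)=n(p-1)$, which is our (i), and its condition~(ii) reads that $(X,\mathbb{Z}_p;\mathbb{R}^n)$ satisfies the $p$-th BUP, which is our (ii); the asserted equivalence is then immediate. For completeness one can recall how the two implications are powered: (i)$\Rightarrow$(ii) is Theorem~\ref{bup-secat}, while (ii)$\Rightarrow$(i) follows by contraposition from Lemma~\ref{lem:lem} applied to the principal fibrations $X\to X/\mathbb{Z}_p$ and $F(\mathbb{R}^n,p,p)\to F(\mathbb{R}^n,p,p)/\mathbb{Z}_p$ (this is where paracompactness of $X$ is used), together with the topological criterion of Proposition~\ref{top-bup}.

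Since the proof is a pure specialization, there is no substantive obstacle: the only step requiring attention is matching the numerical hypothesis of Theorem~\ref{thm:q-bup-scat}, and that is settled precisely because the connectivity of $F(\mathbb{R}^n,p,p)$ and the sectional category of its associated covering are both known exactly, differing by $2$. I would flag the borderline nature of this equality (connectivity $=\mathrm{secat}-2$ rather than strictly larger) as the one point at which a reader should pause, but it causes no difficulty since Theorem~\ref{thm:q-bup-scat} only asks for ``at least''.
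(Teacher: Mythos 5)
Your proposal is correct and follows essentially the same route as the paper: the paper obtains this proposition precisely by specializing Theorem~\ref{thm:q-bup-scat} to $G=\mathbb{Z}_p$, $q=p$, $Y=\mathbb{R}^n$, using that $F(\mathbb{R}^n,p,p)$ is $\bigl(n(p-1)-2\bigr)$-connected and that $\mathrm{secat}\bigl(F(\mathbb{R}^n,p,p)\to F(\mathbb{R}^n,p,p)/\mathbb{Z}_p\bigr)=n(p-1)$. Your extra remarks on where paracompactness enters and on the borderline equality connectivity $=\mathrm{secat}-2$ are accurate but add nothing beyond the paper's argument.
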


On the other hand, Theorem~\ref{theorem:general-secat-index} together with Theorem~\ref{thm:lower-bound-index} imply a relationship between $\mathrm{secat}\left(X\to X/\mathbb{Z}_p\right)$ and $\text{ind}_q\left(X,\mathbb{Z}_p\right)$.

\begin{theorem}\label{prop:secat-indexp}
 Let $2\leq q\leq p$, with $p$ prime. Suppose that $X$ is a paracompact free $\mathbb{Z}_p$-space. Set $n_q=\mathrm{ind}_q\left(X,\mathbb{Z}_p\right)<\infty$. Then   
 \[n_q(q-1)+1\leq\mathrm{secat}\left(X\to X/\mathbb{Z}_p\right)\leq n_q(p-1)+q-1.\] 
 Or equivalently:
   \[\mathrm{secat}\left(X\to X/\mathbb{Z}_p\right)=n_q(q-1)+1+j, \text{ for some } j\in\{0,1,\ldots,\epsilon_{p,q}\},\] where $\epsilon_{p,q}=n_q(p-q)+q-2$. 
\end{theorem}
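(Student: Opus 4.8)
The plan is to read off the double inequality directly from the two theorems that have already been proved, and then observe that the ``or equivalently'' reformulation is just a change of variables. Concretely, since $X$ is a paracompact free $\mathbb{Z}_p$-space with $2\le q\le p\le e_\ell$ — here $\ell=p$ so $e_\ell=p$ — and $n_q=\mathrm{ind}_q(X,\mathbb{Z}_p)<\infty$, Theorem~\ref{thm:lower-bound-index} applies and yields the lower bound $n_q(q-1)+1\le \mathrm{secat}(X\to X/\mathbb{Z}_p)$. Simultaneously Theorem~\ref{theorem:general-secat-index} applies (its hypotheses are exactly $2\le q\le p$ prime, $X$ a free $\mathbb{Z}_p$-space, $n_q=\mathrm{ind}_q(X,\mathbb{Z}_p)$) and yields the upper bound $\mathrm{secat}(X\to X/\mathbb{Z}_p)\le n_q(p-1)+q-1$. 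Chaining these two gives the displayed inequality $n_q(q-1)+1\le \mathrm{secat}(X\to X/\mathbb{Z}_p)\le n_q(p-1)+q-1$.

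For the equivalent formulation, I would simply note that $\mathrm{secat}(X\to X/\mathbb{Z}_p)$ is a nonnegative integer lying in the interval $[\,n_q(q-1)+1,\ n_q(p-1)+q-1\,]$, hence it can be written as $n_q(q-1)+1+j$ where $j=\mathrm{secat}(X\to X/\mathbb{Z}_p)-\big(n_q(q-1)+1\big)\ge 0$. The upper bound gives
\[
j\le \big(n_q(p-1)+q-1\big)-\big(n_q(q-1)+1\big)=n_q(p-1)-n_q(q-1)+q-2=n_q(p-q)+q-2=\epsilon_{p,q},
\]
so $j\in\{0,1,\ldots,\epsilon_{p,q}\}$, which is exactly the claimed statement. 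One should also check that $\epsilon_{p,q}\ge 0$ so that the index set is nonempty: since $p\ge q\ge 2$ we have $n_q(p-q)\ge 0$ and $q-2\ge 0$, so indeed $\epsilon_{p,q}\ge 0$ (and consistently the two bounds are compatible, with equality of the bounds precisely when $n_q=0$ or $q=p$, modulo the $q-2$ term).

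Since every ingredient is already in place, there is no real obstacle here; the ``proof'' is a two-line deduction plus the arithmetic identity for $\epsilon_{p,q}$. If anything needs a word of care, it is only the bookkeeping in the subtraction defining $\epsilon_{p,q}$ and the remark that $\mathrm{secat}$ is integer-valued (so that the interval statement can be repackaged as membership in a finite set of integers), together with the implicit verification that the hypotheses of both Theorem~\ref{thm:lower-bound-index} and Theorem~\ref{theorem:general-secat-index} are subsumed by the hypotheses of the present statement.
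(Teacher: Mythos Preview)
Your proposal is correct and follows exactly the route the paper intends: the double inequality is obtained by combining Theorem~\ref{thm:lower-bound-index} (lower bound) with Theorem~\ref{theorem:general-secat-index} (upper bound), and the reformulation in terms of $\epsilon_{p,q}$ is just the arithmetic you wrote out. The only imprecision is your parenthetical aside about when the two bounds coincide; in fact $\epsilon_{p,q}=0$ forces $q=2$ together with either $p=2$ or $n_q=0$, but this side remark plays no role in the proof itself.
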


\section{Applications}\label{sec:appl}
We begin by provide  a positive answer to the conjecture stated in  the Introduction,  in fact under weaker hypothesis than the ones  stated  there. Then we compute secat of double covering of closed surfaces and double covering of the  $3$-manifolds having geometry  $S^2\times \mathbb{R}$. Furthermore, we compute secat of $|\mathbb{Z}_n|$-covering of closed surfaces. 
 
\medskip Set $\epsilon_{p,q}$ as given in Theorem~\ref{prop:secat-indexp}. Note that $\epsilon_{p,q}=0$ if and only if $p=q=2$. So, we obtain the following result.

\begin{corollary}\label{cor:conj}
  Suppose that $X$ is a paracompact free $\mathbb{Z}_2$-space. Then \[\mathrm{secat}\left(X\to X/\mathbb{Z}_2\right)=\text{ind}_2\left(X,\mathbb{Z}_2\right)+1.\]  
\end{corollary}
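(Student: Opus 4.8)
The plan is to deduce the identity directly from Theorem~\ref{prop:secat-indexp}, after first splitting off the degenerate case in which the index is infinite. The key numerical observation is that for $p=q=2$ the error term in Theorem~\ref{prop:secat-indexp}, namely $\epsilon_{p,q}=n_q(p-q)+q-2$, becomes $\epsilon_{2,2}=n_2\cdot 0+2-2=0$. Hence, under the hypothesis $n_2=\mathrm{ind}_2(X,\mathbb{Z}_2)<\infty$, the two-sided estimate $n_q(q-1)+1\leq\mathrm{secat}(X\to X/\mathbb{Z}_2)\leq n_q(p-1)+q-1$ collapses to $n_2+1\leq\mathrm{secat}(X\to X/\mathbb{Z}_2)\leq n_2+1$, so $\mathrm{secat}(X\to X/\mathbb{Z}_2)=n_2+1=\mathrm{ind}_2(X,\mathbb{Z}_2)+1$. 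Equivalently, one can read this off the reformulated conclusion of Theorem~\ref{prop:secat-indexp}: $\mathrm{secat}(X\to X/\mathbb{Z}_2)=n_2(q-1)+1+j$ with $j\in\{0,1,\ldots,\epsilon_{2,2}\}=\{0\}$.

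Next I would handle the case $\mathrm{ind}_2(X,\mathbb{Z}_2)=\infty$, which is not covered by Theorem~\ref{prop:secat-indexp}. By the definition of the $2$-th index together with Item~(1) of Remark~\ref{rem:index}, if $\mathrm{ind}_2(X,\mathbb{Z}_2)=\infty$ then the triple $\left(X,\mathbb{Z}_2;\mathbb{R}^n\right)$ satisfies the BUP for every integer $n\geq 0$. Applying Proposition~\ref{prop-lower-bound} with $G=\mathbb{Z}_2$ and $q=2$ (so that $e_\ell=2$ and the hypothesis $2\leq q\leq e_\ell$ is met), we get $\mathrm{secat}\left(X\to X/\mathbb{Z}_2\right)\geq n(q-1)+1=n+1$ for every $n\geq 0$. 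Letting $n\to\infty$ forces $\mathrm{secat}\left(X\to X/\mathbb{Z}_2\right)=\infty$, so with the standing convention $\infty+1=\infty$ the desired equality holds in this case as well. Combining the two cases yields the corollary.

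This argument is essentially bookkeeping on top of the earlier results, so I do not expect a real obstacle; the only point that requires attention is the infinite-index case, and it is precisely the paracompactness of $X$ that permits the use of Proposition~\ref{prop-lower-bound} there (that proposition, in turn, rests on Lemma~\ref{lem:lem}, whose paracompactness hypothesis on the base is needed). One should also note explicitly that the standing hypotheses of Theorem~\ref{prop:secat-indexp} and Proposition~\ref{prop-lower-bound} are satisfied by $(X,\mathbb{Z}_2)$: $\mathbb{Z}_2$ is finite of order $\ell=2$, $e_\ell=2$, and the range condition $2\leq q\leq e_\ell$ holds with equality for $q=2$.
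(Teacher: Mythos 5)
Your proposal is correct and follows the paper's own route: the paper proves this corollary exactly as you do in the finite case, by reading off Theorem~\ref{prop:secat-indexp} with $p=q=2$, where $\epsilon_{2,2}=0$ collapses the two bounds to $n_2+1$. Your separate treatment of the case $\mathrm{ind}_2(X,\mathbb{Z}_2)=\infty$ via Remark~\ref{rem:index}(1) and Proposition~\ref{prop-lower-bound} is a sound extra step that the paper's one-line proof leaves implicit (Theorem~\ref{prop:secat-indexp} is stated only for $n_q<\infty$), so it is a welcome, if minor, point of added care rather than a different approach.
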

\begin{proof}
    It is a direct consequence of Theorem~\ref{prop:secat-indexp}.
\end{proof}

In addition, Theorem~\ref{prop:secat-indexp} implies the following statement which provides more accurate results than Proposition~\ref{prop:lower-secat-index}. 

\begin{corollary}\label{cor:lower-secat-lower-also-index}
 Let $2\leq q\leq p$, with $p$ prime. Suppose that $X$ is a paracompact free $\mathbb{Z}_p$-space and $\text{ind}_q\left(X,\mathbb{Z}_p\right)<\infty$.
 \begin{enumerate}
     \item Suppose that $\mathrm{secat}\left(X\to X/\mathbb{Z}_p\right)=2$. 
 \begin{enumerate}
      \item[(1)] $\text{ind}_q\left(X,\mathbb{Z}_p\right)=0$ whenever $q\geq 3$.
      \item[(2)] $\text{ind}_2\left(X,\mathbb{Z}_p\right)=1$. 
  \end{enumerate} 
  \item Suppose that $\mathrm{secat}\left(X\to X/\mathbb{Z}_p\right)=3$.
   \begin{enumerate}
      \item[(1)] $\text{ind}_q\left(X,\mathbb{Z}_p\right)=0$ whenever $q\geq 4$.
      \item[(2)] $\text{ind}_3\left(X,\mathbb{Z}_p\right)=1$. 
      \item[(3)] $n_2\in\{1,2\}$. 
  \end{enumerate} 
  \end{enumerate} 
\end{corollary}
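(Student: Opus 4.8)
The plan is to derive everything directly from the two-sided estimate in Theorem~\ref{prop:secat-indexp}, namely $n_q(q-1)+1\leq \mathrm{secat}(X\to X/\mathbb{Z}_p)\leq n_q(p-1)+q-1$ together with the monotonicity $\mathrm{ind}_q(X,\mathbb{Z}_p)\geq \mathrm{ind}_{q+1}(X,\mathbb{Z}_p)$ from Item~(2) of Remark~\ref{rem:index}. The guiding observation is that the lower bound $n_q(q-1)+1$ already forces $n_q$ to be small once $\mathrm{secat}$ is fixed, and that in the boundary cases the upper bound pins the value exactly.

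For part~(1), assume $\mathrm{secat}(X\to X/\mathbb{Z}_p)=2$. From the lower bound $n_q(q-1)+1\leq 2$ we get $n_q(q-1)\leq 1$. If $q\geq 3$ then $q-1\geq 2$, forcing $n_q=0$, which is statement~(1a). If $q=2$ the inequality gives $n_2\leq 1$; to exclude $n_2=0$, apply the upper bound in Theorem~\ref{prop:secat-indexp} with $q=2$: if $n_2=0$ then $\mathrm{secat}(X\to X/\mathbb{Z}_p)\leq 0\cdot(p-1)+2-1=1$, contradicting $\mathrm{secat}=2$. Hence $n_2=1$, which is statement~(1b). For part~(2), assume $\mathrm{secat}(X\to X/\mathbb{Z}_p)=3$. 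The lower bound gives $n_q(q-1)\leq 2$. If $q\geq 4$ then $q-1\geq 3$ forces $n_q=0$ (statement~(2.1)). If $q=3$ then $2n_3\leq 2$, so $n_3\in\{0,1\}$; to rule out $n_3=0$, the upper bound with $q=3$ gives, when $n_3=0$, $\mathrm{secat}\leq 0\cdot(p-1)+3-1=2<3$, a contradiction, so $n_3=1$ (statement~(2.2)). If $q=2$ then $n_2\leq 2$; and $n_2=0$ is excluded exactly as in part~(1) (the upper bound would give $\mathrm{secat}\leq 1$), leaving $n_2\in\{1,2\}$ (statement~(2.3)). One should note that in cases~(2.1) and~(2.2) one could alternatively invoke the monotonicity $n_2\geq n_3\geq n_q$ together with the already-established bounds, but the direct argument from Theorem~\ref{prop:secat-indexp} is cleanest and self-contained.

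I do not anticipate a genuine obstacle here: the statement is a bookkeeping consequence of the sandwich inequality, and every case reduces to one application of the lower bound (to cap $n_q$ from above) and, in the extremal cases, one application of the upper bound (to exclude $n_q=0$). The only mild subtlety worth a sentence in the writeup is making sure that when $q=2$ the upper bound really reads $n_2(p-1)+1$, so that $n_2=0$ yields $\mathrm{secat}\leq 1$ regardless of $p$; this is the single place where the precise form of $\epsilon_{p,q}$ and the $q-1$ summand matters.

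\begin{proof}
Throughout write $s=\mathrm{secat}\left(X\to X/\mathbb{Z}_p\right)$ and, for each admissible $q$, $n_q=\mathrm{ind}_q\left(X,\mathbb{Z}_p\right)$. By Theorem~\ref{prop:secat-indexp} we have
\[n_q(q-1)+1\leq s\leq n_q(p-1)+q-1.\]

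\textit{Part (1): $s=2$.} The left inequality gives $n_q(q-1)\leq 1$. If $q\geq 3$, then $q-1\geq 2$, so $n_q=0$; this proves (1.1). If $q=2$, the same inequality gives $n_2\leq 1$. Were $n_2=0$, the right inequality (with $q=2$) would give $s\leq 0\cdot(p-1)+1=1$, contradicting $s=2$. Hence $n_2=1$, which proves (1.2).

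\textit{Part (2): $s=3$.} The left inequality gives $n_q(q-1)\leq 2$. If $q\geq 4$, then $q-1\geq 3$, so $n_q=0$; this proves (2.1). If $q=3$, then $2n_3\leq 2$, so $n_3\in\{0,1\}$; were $n_3=0$, the right inequality (with $q=3$) would give $s\leq 0\cdot(p-1)+2=2$, contradicting $s=3$, so $n_3=1$, which proves (2.2). If $q=2$, then $n_2\leq 2$; were $n_2=0$, the right inequality (with $q=2$) would give $s\leq 1$, a contradiction. Hence $n_2\in\{1,2\}$, which proves (2.3).
\end{proof}
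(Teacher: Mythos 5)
Your proof is correct and follows essentially the same route as the paper: both read off the case analysis directly from the sandwich inequality $n_q(q-1)+1\leq \mathrm{secat}(X\to X/\mathbb{Z}_p)\leq n_q(p-1)+q-1$ of Theorem~\ref{prop:secat-indexp}, using the lower bound to cap $n_q$ and the upper bound to exclude $n_q=0$ in the $q=2,3$ cases. The only cosmetic difference is that the paper phrases the exclusion of $n_q=0$ as a direct consequence of the right-hand inequality rather than as an explicit contradiction.
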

\begin{proof} Set $n_q=\text{ind}_q\left(X,\mathbb{Z}_p\right)$. 
\begin{enumerate}
        \item Suppose that $\mathrm{secat}\left(X\to X/\mathbb{Z}_p\right)=2$.  By Theorem~\ref{prop:secat-indexp}, we have:
        \begin{align*}
            n_q(q-1)+1\leq 2\leq n_q(p-1)+q-1.
        \end{align*} In the case $q\geq 3$, we can conclude that $2n_q+1\leq n_q(q-1)+1\leq 2$ and thus $n_q=0$. On the other hand, $n_2+1\leq 2\leq n_2(p-1)+1$ and hence $n_2=1$.
        \item Suppose that $\mathrm{secat}\left(X\to X/\mathbb{Z}_p\right)=3$. By Theorem~\ref{prop:secat-indexp}, we have:
        \begin{align*}
            n_q(q-1)+1\leq 3\leq n_q(p-1)+q-1.
        \end{align*} In the case $q\geq 4$, we can conclude that $3n_q+1\leq n_q(q-1)+1\leq 3$ and thus $n_q=0$. On the other hand, $2n_3+1\leq 3\leq n_3(p-1)+2$ and hence $n_3=1$. Furthermore, $n_2+1\leq 3\leq n_2(p-1)+1$ and we obtain that $n_2\in\{1,2\}$.
    \end{enumerate}
\end{proof}

From Item (2) of Lemma~\ref{prop-secat-map}, the equality $\mathrm{secat}(p\times 1_Z)=\mathrm{secat}(p)$ holds for any fibration. Thus, we have the following example.

\begin{example}\label{example-final}
Given a free action of $\mathbb{Z}_p$ on $S^1$, with $p$ prime. Set $Y$ any Hausdorff space and consider the space $M=S^{1}\times Y$ equipped with the free $\mathbb{Z}_p$-action $g(x,y)=(gx,y)$, $g\in \mathbb{Z}_p$ and $(x,y)\in M$. Note that, the quotient map $\pi':M\to M/\mathbb{Z}_p$ coincides with the product $\pi\times 1_{Y}$, where $\pi:S^{1}\to S^1/\mathbb{Z}_p$ is the quotient map, and so, by Item (2) of Lemma~\ref{prop-secat-map}, we obtain that $\mathrm{secat}(\pi')=\mathrm{secat}(\pi)=2$ (the last equality follows from Example~\ref{exam:s1-zp}). 

On the other hand, by Equation~(\ref{eq:equiv}), we have the following $\mathbb{Z}_p$-equivariant map  
\begin{eqnarray*}
\xymatrix@C=2cm{ 
      M  \ar[r]^{\varphi} &  F(\mathbb{R}^2,p,2)
       }
\end{eqnarray*} where $\varphi(x,y)=\left(x,\sigma^{p-1}x,\sigma^{p-2}x,\ldots,\sigma^{2}x,\sigma x\right)$ for any $(x,y)\in M$ (here $\sigma$ is the generator of $\mathbb{Z}_p$ and therefore $\mathbb{Z}_p=\{1,\sigma,\sigma^2,\ldots,\sigma^{p-1}\}$). Then the triple $\left(M,\mathbb{Z}_p;\mathbb{R}^2\right)$ does not satisfy the $2$-th BUP (hence, it does not satisfy the $q$-th BUP, for any $q\in\{2,\ldots,p\}$).

Since $F(\mathbb{R},p,2)$ is not path-connected, we have that there is not a $\mathbb{Z}_p$-equivariant map from $M$ to $F(\mathbb{R},p,2)$, that is, see Proposition~\ref{top-bup}, the triple $\left(M,\mathbb{Z}_p;\mathbb{R}\right)$ satisfies the $2$-th BUP and thus $\mathrm{ind}_2\left(M,\mathbb{Z}_p\right)=1$. 
\end{example}

As a  straightforward  application  of Corollary~\ref{cor:conj}, we determine  the sectional category of a double covering $\pi:S_1 \to S_2$ between any two closed surfaces. This is a corollary of  \cite[Theorem 5.5]{daciberg2006} together with the equality $\mathrm{secat}(\pi)=\mathrm{ind}_{2}(X,\mathbb{Z}_2)+1$.
 
 Recall from \cite{daciberg2006} that involutions on closed surfaces are classified by means of 
 elements of $Hom(\pi_1(S),\mathbb{Z}_2)$ which are surjectives, where $S$ is a surface. For nonorientable  surface, using a 
 standard presentation of its fundamental group, a homomorphism is completely determined by the image on the generators, which are denoted by $\delta_i$ on the statement below. See \cite{daciberg2006} for more details.

 \begin{corollary}\label{cor} Let $(S, \tau )$ be a pair where $S$ is a closed surface and $\tau $ is a free $\mathbb{Z}_2$-action. The sectional category of the projection map $\pi: S\to S/\tau$ is three if one of the following conditions below holds:
\begin{enumerate}
    \item[1)]   $S$ is orientable and its Euler characteristic is congruent to 2 mod 4.
     \item[2)] $S$ is nonorientable, the Euler characteristic is congruent to 2 mod 4, and the
action $\tau$ is equivalent to one of the canonical actions (which correspond to the
subgroups given by the sequences of the form $(1, \delta_2, \delta_3, \ldots, \delta_{2r+1})$,  
where $\delta_i$ is either $\bar 1$ or  $\bar 0$).
\item[3)] S is nonorientable, the Euler characteristic is congruent to $0$ mod $4$ and $\tau$ is
equivalent to one of the canonical actions (which correspond to the subgroups
given by the sequences of the form $(1, \delta_2, \delta_3, \ldots, \delta_{2r})$, where $\delta_i$ is either $\bar 1$ or
 $\bar  0$).
\end{enumerate}
 Otherwise  it is two.
   \end{corollary}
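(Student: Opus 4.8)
The plan is to reduce the computation of $\mathrm{secat}(\pi)$ to a computation of $\mathrm{ind}_2(S,\mathbb{Z}_2)$ via Corollary~\ref{cor:conj}, and then to import the Borsuk-Ulam data for free involutions on surfaces from \cite{daciberg2006}. First I would observe that a closed surface is a compact CW complex, hence paracompact, so $\pi: S\to S/\tau$ is a fibration and Corollary~\ref{cor:conj} applies, giving $\mathrm{secat}(\pi)=\mathrm{ind}_2(S,\mathbb{Z}_2)+1$. Thus it suffices to show that $\mathrm{ind}_2(S,\mathbb{Z}_2)=2$ precisely in cases 1)--3) and $\mathrm{ind}_2(S,\mathbb{Z}_2)=1$ otherwise. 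Since $S$ is a $2$-dimensional CW complex carrying a free (cellular, after choosing an equivariant CW structure) $\mathbb{Z}_2$-action, Proposition~\ref{firs-estimation} with $q=2$, $\ell=2$ gives $\mathrm{ind}_2(S,\mathbb{Z}_2)\le (2+1)/(2-1)-1=2$; and since $S$ is path-connected, Remark~\ref{trivial-crosssection}(ii) together with Corollary~\ref{cor:conj} (or Proposition~\ref{secat-q-index-1}) forces $\mathrm{ind}_2(S,\mathbb{Z}_2)\ge 1$. So the only question is whether the index equals $1$ or $2$, equivalently (by Proposition~\ref{top-bup} and Definition~\ref{defn-index}) whether or not there exists a $\mathbb{Z}_2$-equivariant map $S\to F(\mathbb{R}^2,2,2)\simeq S^1$.

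The key point is that, by \cite[Theorem 3.30]{zapata2023} (quoted in the Conjecture in the Introduction) together with Corollary~\ref{cor:conj}, the statement ``$\mathrm{ind}_2(S,\mathbb{Z}_2)=2$'' is equivalent to ``$(S,\tau;\mathbb{R}^2)$ satisfies the BUP'' is equivalent to ``there is no $\mathbb{Z}_2$-equivariant map $S\to S^1$ (with $S^1$ carrying the antipodal action).'' Now the existence of an equivariant map $S\to S^1=S^1_{\mathrm{antipodal}}$ is exactly the obstruction studied in \cite{daciberg2006}: one translates it to a question about whether the classifying map $S/\tau\to \mathbb{R}P^\infty=K(\mathbb{Z}_2,1)$ of the double cover lifts (up to homotopy) through $S^1\to \mathbb{R}P^1\hookrightarrow \mathbb{R}P^\infty$, i.e. whether the first Stiefel--Whitney class of the cover admits a ``square-root''-type splitting; concretely this is governed by the cohomology class in $H^1(S/\tau;\mathbb{Z}_2)$ classifying the cover and by the cup-square obstruction in $H^2(S/\tau;\mathbb{Z}_2)$. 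The content of \cite[Theorem 5.5]{daciberg2006} is precisely the determination, in terms of orientability, Euler characteristic mod $4$, and the combinatorial type of the action (the sequences $(1,\delta_2,\ldots)$), of exactly when the BUP holds, i.e. when no such equivariant map exists. So the proof consists of: (i) citing \cite[Theorem 5.5]{daciberg2006} to list the pairs $(S,\tau)$ for which $(S,\tau;\mathbb{R}^2)$ satisfies the BUP --- these are exactly cases 1)--3); (ii) invoking Corollary~\ref{cor:conj} to turn ``BUP holds'' into ``$\mathrm{secat}(\pi)=3$'' and ``BUP fails'' into ``$\mathrm{secat}(\pi)=2$,'' using the already established bounds $1\le \mathrm{ind}_2\le 2$ to know there are no other possibilities.

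I would write it out roughly as follows: since $S$ is a closed surface, $S/\tau$ is also a closed surface and $\pi$ is a double cover; by Corollary~\ref{cor:conj}, $\mathrm{secat}(\pi)=\mathrm{ind}_2(S,\mathbb{Z}_2)+1$. By Proposition~\ref{firs-estimation} (with $n=2$, $q=\ell=2$) and path-connectedness of $S$, we have $\mathrm{ind}_2(S,\mathbb{Z}_2)\in\{1,2\}$, so $\mathrm{secat}(\pi)\in\{2,3\}$. Finally, $\mathrm{secat}(\pi)=3$ iff $\mathrm{ind}_2(S,\mathbb{Z}_2)=2$ iff $(S,\tau;\mathbb{R}^2)$ satisfies the BUP (by Remark~\ref{rem:index}(1)) iff there is no $\mathbb{Z}_2$-equivariant map $S\to F(\mathbb{R}^2,2,2)$ (by Proposition~\ref{top-bup}), which by \cite[Theorem 5.5]{daciberg2006} happens exactly in cases 1)--3).

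\textbf{Main obstacle.} The real work is not in our paper at all: it is the identification of ``$(S,\tau;\mathbb{R}^2)$ satisfies the BUP'' with the explicit orientability / Euler-characteristic-mod-$4$ / canonical-action conditions, which is the substance of \cite[Theorem 5.5]{daciberg2006}. The subtlety I would be careful about when writing this up is to make sure the classification of free involutions on closed surfaces used in \cite{daciberg2006} (via surjections $\pi_1(S)\to\mathbb{Z}_2$) is quoted with the correct correspondence to the sequences $(1,\delta_2,\delta_3,\ldots)$, and that the ``BUP fails'' cases in \cite{daciberg2006} really are the complement of cases 1)--3); a secondary point to check is simply that every free $\mathbb{Z}_2$-action on a closed surface can be given a cellular structure so that Proposition~\ref{firs-estimation} applies, which is standard (equivariant triangulation) but should be remarked.
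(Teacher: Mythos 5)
Your proposal is correct and follows essentially the same route as the paper, which derives the corollary directly from the equality $\mathrm{secat}(\pi)=\mathrm{ind}_2(S,\mathbb{Z}_2)+1$ of Corollary~\ref{cor:conj} combined with the Borsuk--Ulam classification in \cite[Theorem 5.5]{daciberg2006}. Your additional bookkeeping (the bounds $1\le\mathrm{ind}_2(S,\mathbb{Z}_2)\le 2$ via Proposition~\ref{firs-estimation}, path-connectedness, and Remark~\ref{trivial-crosssection}) just makes explicit what the paper leaves as a straightforward observation, so there is nothing to correct.
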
 

   \begin{remark}
      The particular case where $S$ is orientable and the quotient is nonorientable, from Corollary~\ref{cor} we obtain: When $S/\tau$ has
 Euler characteristic odd ( i.e. a connected  sum of an odd numbers of projective planes) the sectional category is 
 $3$ and when $S/\tau$ has Euler characteristic even( i.e. a connected  sum of an even  numbers of projective planes) the sectional category is two. 
   \end{remark}
   
   Following \cite{bghz} we have the following four $3$-manifolds: 
   $S^2\times S^1$,   $E=S^2  \tilde \times S^1\ $(the non-orientable  $S^2$-bundle over  $S^1$),  $\mathbb{R}P^2   \times  S^1$ and    $\mathbb{R}P^2\#\mathbb{R}P^2$. The main result  there is:
   
 \begin{theorem}\cite[Theorem 4]{bghz}\label{thm:dac}
       Up to equivalence, we have:
   \end{theorem}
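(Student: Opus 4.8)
The plan is to convert the classification into an algebraic problem about index-two extensions and then resolve geometry and equivalence using the special structure of $S^2\times\mathbb{R}$ manifolds. First I would record the basic correspondence: a free involution $\tau$ on a closed $3$-manifold $M$ is the same datum as a free $\mathbb{Z}_2$-action, hence as a connected double cover $M\to M/\tau$, which is encoded by the short exact sequence
\[
1\longrightarrow \pi_1(M)\longrightarrow \pi_1(M/\tau)\longrightarrow \mathbb{Z}_2\longrightarrow 1,
\]
together with the first Stiefel--Whitney data recording whether $\tau$ preserves or reverses orientation. Two free involutions are equivalent precisely when some self-homeomorphism of $M$ conjugates one to the other, so on the level of this data equivalence is the action of the (outer) mapping class group of $M$ on the set of such extensions. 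Since the four manifolds listed are exactly the closed $3$-manifolds carrying the geometry $S^2\times\mathbb{R}$, and a free involution passes the geometry to the quotient, both $M$ and $M/\tau$ again lie in this finite list; the classification is therefore bookkeeping over a finite table.

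Next I would geometrize the involutions. These manifolds are Seifert fibered with base an orbifold covered by $S^2$, and every free involution can be conjugated by a homeomorphism to an isometry of the $S^2\times\mathbb{R}$ structure; for this geometry that reduction is visible directly from the product Seifert structure and plays the role of the geometrization of finite group actions. I would then analyse $\mathrm{Isom}(S^2\times\mathbb{R})\cong O(3)\times(\mathbb{R}\rtimes\mathbb{Z}_2)$ and single out the order-two elements acting freely: on the $S^2$ factor the only fixed-point-free involution is the antipodal map, so the $O(3)$-component is either the identity or the antipodal map, while the $\mathbb{R}$-component is a nontrivial translation or a reflection. Pairing these possibilities with the $\pi_1$-constraint isolates, for each of the four manifolds, the admissible free involutions.

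Then I would run the case analysis manifold by manifold. For each $M$ I would enumerate the surjections $\pi_1(M)\to\mathbb{Z}_2$ (equivalently the index-two subgroups, i.e.\ the connected double covers), compute the quotient by recognising its fundamental group inside the list $\{\mathbb{Z},\ \mathbb{Z}\times\mathbb{Z}_2,\ \mathbb{Z}_2\ast\mathbb{Z}_2\}$, and then pin down orientability using the first Stiefel--Whitney class. A point requiring care is that $\pi_1$ alone does not separate $S^2\times S^1$ from $S^2\tilde\times S^1$, so I must additionally track the $\pi_1$-action on $\pi_2\cong\mathbb{Z}$ and the orientation behaviour; these extra invariants complete the entries of the table. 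To pass from double covers back to involutions on the covered manifold I would read the same extensions in the opposite direction, using the covering diagram of Remark~\ref{bup-pullback} to keep the $\mathbb{Z}_2$-equivariance tidy.

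The hard part will be the equivalence step rather than the existence step: I must show that two free involutions with the same extension data, the same $\pi_1$-action on $\pi_2$, and the same orientation behaviour are genuinely conjugate by a self-homeomorphism of $M$, and that the geometric involutions above exhaust all free involutions up to equivalence. This demands the rigidity available for $S^2\times\mathbb{R}$ manifolds, namely an explicit understanding of $\pi_0\,\mathrm{Homeo}(M)$ and of how it permutes the set of extensions, which substitutes for the Mostow-type rigidity unavailable in this non-aspherical setting. Once this rigidity is in place, the finitely many geometric models are pairwise inequivalent exactly when their algebraic and orientation invariants differ, and the classification table follows.
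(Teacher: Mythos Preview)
The paper does not prove this theorem: it is quoted from \cite{bghz} as an external input and then combined with Corollary~\ref{cor:conj} to obtain Corollary~\ref{cor:s3-r-geometry}. There is therefore no proof in the present paper to compare your proposal against.

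That said, your outline addresses only half of the statement. Theorem~\ref{thm:dac} records, for each free involution $\tau_i$, two pieces of data: the homeomorphism type of the quotient \emph{and} the value of $\mathrm{ind}_2$. Your plan is a plausible route to the classification of the involutions and the identification of the quotients (and is in the spirit of what the cited source does), but it says nothing about computing the indices. The index values are precisely the part this paper needs, since they feed into $\mathrm{secat}=\mathrm{ind}_2+1$. Determining $\mathrm{ind}_2(M,\tau_i)$ requires separate work: explicit maps $M\to\mathbb{R}^{k}$ without coincidences on orbits for the upper bounds, and cohomological or obstruction-theoretic arguments (e.g.\ Stiefel--Whitney-type classes of the covering, or the connectivity/equivariant-map criteria in Proposition~\ref{firs-estimation} and Lemma~\ref{lem:lem}) for the lower bounds. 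None of this is reachable from the extension-theoretic and rigidity analysis you describe, so as written your proposal leaves the substantive half of the theorem unproved.
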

   \begin{enumerate}
       \item[A)] The space $S^2 \times S^1$ admits four free involutions $\tau_1, \tau_2, \tau_3, \tau_4$ such that:
       \begin{enumerate}
           \item[(1)] The quotient $(S^2 \times  S^1)/\tau_1$  is homeomorphic to $S^2 \times  S^1$, and $\mathrm{ind}_{2} \left(S^2 \times  S^1,\tau_1\right) = 1$.
           \item[(2)] The quotient $(S^2 \times  S^1)/\tau_2$  is homeomorphic to $E$, and $\mathrm{ind}_{2}\left(S^2 \times  S^1,\tau_2\right) = 1$.
           \item[(3)] The quotient $(S^2 \times  S^1)/\tau_3$  is homeomorphic to $\mathbb{R}P^2  \times  S^1$, and $\mathrm{ind}_{2}\left(S^2 \times  S^1,\tau_3\right)=2$.
           \item[(4)] The quotient $(S^2 \times  S^1)/\tau_4$  is homeomorphic to   $\mathbb{R}P^3\#\mathbb{R}P^3$, and $\mathrm{ind}_{2}\left(S^2 \times  S^1,\tau_4\right)= 2$.
       \end{enumerate}
\item[B)] The space $E$ admits a unique free involution $\tau_5$. The quotient $E/\tau_5$ is homeomorphic to $\mathbb{R}P^2 \times  S^1$, and  $\mathrm{ind}_{2}\left(E,\tau_5\right)=3$.
\item[C)] The space  $ \mathbb{R}P^2 \times S^1$ admits a unique free involution $\tau_6$ . 
The quotient  $(\mathbb{R}P^2 \times S^1)/\tau_6$ is homeomorphic to 
$\mathbb{R}P^2 \times S^1$, and  $\mathrm{ind}_{2}\left( \mathbb{R}P^2   \times  S^1,\tau_6\right)=1$.
\item[D)] The space $\mathbb{R}P^2\#\mathbb{R}P^2$ admits a unique free involution $\tau_7$. The quotient $(\mathbb{R}P^2\#\mathbb{R}P^2)/\tau_7$ is homeomorphic to $\mathbb{R}P^3\#\mathbb{R}P^3$, and $\mathrm{ind}_{2}\left(\mathbb{R}P^3\#\mathbb{R}P^3,  \tau_7\right)=3$.
   \end{enumerate}
   
From Theorem~\ref{thm:dac} together with Corollary~\ref{cor:conj} follows:

\begin{corollary}\label{cor:s3-r-geometry} With the notation of Theorem~\ref{thm:dac} we obtain:
\begin{enumerate}
\item $\mathrm{secat}\left(S^2 \times S^1 \to (S^2 \times  S^1)/\tau_1\right)=2$;
\item $\mathrm{secat}\left(S^2 \times S^1 \to (S^2 \times  S^1)/\tau_2\right)=2$;
\item $\mathrm{secat}\left(S^2 \times S^1 \to (S^2 \times  S^1)/\tau_3\right)=3$;
\item $\mathrm{secat}\left(S^2 \times S^1 \to (S^2 \times  S^1)/\tau_4\right)=3$;
\item $\mathrm{secat}\left(E \to E/\tau_5\right)=4$;
\item $\mathrm{secat}\left(\mathbb{R}P^2 \times S^1 \to  (\mathbb{R}P^2 \times S^1)/\tau_6\right)=2$;
\item $\mathrm{secat}\left(\mathbb{R}P^2\#\mathbb{R}P^2  \to (\mathbb{R}P^2\#\mathbb{R}P^2)/\tau_7\right)=4$.
\end{enumerate}
\end{corollary}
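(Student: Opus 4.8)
The plan is to read off each claim directly from Theorem~\ref{thm:dac} together with Corollary~\ref{cor:conj}, since each of the seven $3$-manifolds in question is a closed manifold, hence a paracompact free $\mathbb{Z}_2$-space, so Corollary~\ref{cor:conj} applies verbatim. First I would recall that Corollary~\ref{cor:conj} gives, for every paracompact free $\mathbb{Z}_2$-space $X$, the identity
\[
\mathrm{secat}\left(X\to X/\mathbb{Z}_2\right)=\mathrm{ind}_2\left(X,\mathbb{Z}_2\right)+1.
\]
Then for each item $(1)$--$(7)$ I would simply substitute the value of $\mathrm{ind}_2$ provided by the corresponding item of Theorem~\ref{thm:dac}: items A(1) and A(2) give $\mathrm{ind}_2=1$, hence $\mathrm{secat}=2$; items A(3) and A(4) give $\mathrm{ind}_2=2$, hence $\mathrm{secat}=3$; item B gives $\mathrm{ind}_2\left(E,\tau_5\right)=3$, hence $\mathrm{secat}=4$; item C gives $\mathrm{ind}_2=1$, hence $\mathrm{secat}=2$; item D gives $\mathrm{ind}_2=3$, hence $\mathrm{secat}=4$.

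The only point requiring a word of care is the matching of spaces and involutions between the statement of Theorem~\ref{thm:dac} and the fibrations appearing in the corollary: in items B and D of Theorem~\ref{thm:dac} the index is written with the quotient manifold ($\mathbb{R}P^3\#\mathbb{R}P^3$ in item D) as the first argument, whereas the corollary refers to the covering $E\to E/\tau_5$ and $\mathbb{R}P^2\#\mathbb{R}P^2\to(\mathbb{R}P^2\#\mathbb{R}P^2)/\tau_7$; so I would note explicitly that the $\mathbb{Z}_2$-space carrying the free action is the total space of the covering (that is, $E$ with $\tau_5$, and $\mathbb{R}P^2\#\mathbb{R}P^2$ with $\tau_7$), which is exactly the convention under which $\mathrm{ind}_2$ and $\mathrm{secat}$ of the quotient map are defined. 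Modulo this bookkeeping there is nothing further to prove.

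I do not anticipate any genuine obstacle: the entire content is already packaged in Theorem~\ref{thm:dac} and Corollary~\ref{cor:conj}, and the proof is a one-line invocation of the formula $\mathrm{secat}=\mathrm{ind}_2+1$ applied seven times. The ``hardest'' part is merely transcription accuracy — making sure each of the seven numerical values is paired with the right covering map.

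\begin{proof}
Each of the $3$-manifolds $S^2\times S^1$, $E$ and $\mathbb{R}P^2\#\mathbb{R}P^2$ is a closed manifold, hence a paracompact free $\mathbb{Z}_2$-space with respect to the corresponding free involution. By Corollary~\ref{cor:conj}, for each such space $X$ with free involution $\tau$ we have
\[
\mathrm{secat}\left(X\to X/\tau\right)=\mathrm{ind}_2\left(X,\tau\right)+1.
\]
Substituting the values of $\mathrm{ind}_2$ given in Theorem~\ref{thm:dac}: items A(1) and A(2) give $\mathrm{ind}_2\left(S^2\times S^1,\tau_i\right)=1$ for $i=1,2$, so $\mathrm{secat}=2$; items A(3) and A(4) give $\mathrm{ind}_2\left(S^2\times S^1,\tau_i\right)=2$ for $i=3,4$, so $\mathrm{secat}=3$; item B gives $\mathrm{ind}_2\left(E,\tau_5\right)=3$, so $\mathrm{secat}\left(E\to E/\tau_5\right)=4$; item C gives $\mathrm{ind}_2\left(\mathbb{R}P^2\times S^1,\tau_6\right)=1$, so $\mathrm{secat}=2$; and item D gives $\mathrm{ind}_2\left(\mathbb{R}P^2\#\mathbb{R}P^2,\tau_7\right)=3$, so $\mathrm{secat}\left(\mathbb{R}P^2\#\mathbb{R}P^2\to(\mathbb{R}P^2\#\mathbb{R}P^2)/\tau_7\right)=4$. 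This establishes all seven equalities.
\end{proof}
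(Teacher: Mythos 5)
Your proof is correct and is exactly the paper's argument: the paper derives Corollary~\ref{cor:s3-r-geometry} by applying the identity $\mathrm{secat}\left(X\to X/\mathbb{Z}_2\right)=\mathrm{ind}_2\left(X,\mathbb{Z}_2\right)+1$ of Corollary~\ref{cor:conj} to the seven index values listed in Theorem~\ref{thm:dac}, with no further content. Your remark on matching the total space of the covering with the space carrying the free involution (items B and D) is sensible bookkeeping and does not change the argument.
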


Following~\cite[Theorem 1.1]{dajovi2006} we have the  following result.

\begin{theorem}\cite[Theorem 1.1, p. 1805]{dajovi2006}\label{Zn-surface} Let $M$ be a compact connected surface without boundary, and 
let $\tau : \mathbb{Z}_n \times M \to  M$ be a free action. 
Then the triple $(M,\mathbb{Z}_n; \mathbb{R}^2)$ has the Borsuk-Ulam property if and only if the
following conditions are satisfied:
\begin{enumerate}
    \item[(1)] $n \equiv  2$  mod $4$.
    \item[(2)] $M/\mathbb{Z}_n$ is non-orientable, and $(\theta_{\tau})_{Ab}(\delta)$ is non trivial.
\end{enumerate} 
\end{theorem}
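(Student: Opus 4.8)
The plan is to recast the Borsuk--Ulam statement as a homomorphism-lifting problem from a surface group to a braid group, to obstruct lifts by a sign-of-permutation invariant, and to construct lifts explicitly whenever that obstruction vanishes.

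\textbf{Reduction.} For $q=2$ the configuration-like space is the ordered configuration space, $F(\mathbb{R}^2,n,2)=F(\mathbb{R}^2,n)$, so by Proposition~\ref{top-bup} the triple $(M,\mathbb{Z}_n;\mathbb{R}^2)$ \emph{fails} the BUP if and only if there is a $\mathbb{Z}_n$-equivariant map $M\to F(\mathbb{R}^2,n)$, where $\mathbb{Z}_n$ acts freely by cyclically permuting the coordinates (Equation~(\ref{eq:action})). As $F(\mathbb{R}^2,n)$ is a $K(P_n,1)$ (Fadell--Neuwirth, \cite{fadell1962configuration}) and $\mathbb{Z}_n$ acts freely on it, $F(\mathbb{R}^2,n)/\mathbb{Z}_n$ is a $K(B,1)$, where $B:=B_n(\mathbb{Z}_n)\le B_n$ is the preimage of $\langle(1\,2\,\cdots\,n)\rangle\le\Sigma_n$ under $B_n\to\Sigma_n$; write $\rho\colon B\twoheadrightarrow\mathbb{Z}_n$ for the induced projection. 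Assume first that $N:=M/\mathbb{Z}_n$ is aspherical (i.e.\ $\chi(M)\le 0$). Descending the equivariant map (both $M\to N$ and $F(\mathbb{R}^2,n)\to F(\mathbb{R}^2,n)/\mathbb{Z}_n$ are regular $\mathbb{Z}_n$-coverings and the square is a pullback, Remark~\ref{bup-pullback}) and using asphericity, the problem becomes: \emph{the BUP fails if and only if there is a homomorphism $\phi\colon\pi_1(N)\to B$ with $\rho\circ\phi=\theta_\tau$}, where $\theta_\tau$ classifies $M\to N$. For $n\ge 2$ the only non-aspherical case is $N=\mathbb{R}P^2$, $M=S^2$, $n=2$, which is the classical Borsuk-Ulam theorem and agrees with the stated conditions.

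\textbf{Obstruction: (1)+(2) imply the BUP.} Fix a cross-cap presentation $\pi_1(N)=\langle a_1,\dots,a_k\mid a_1^2\cdots a_k^2\rangle$ of the nonorientable surface $N$, so that $\delta=[a_1]+\cdots+[a_k]\in H_1(N)$ is the canonical $2$-torsion class; thus $(\theta_\tau)_{Ab}(\delta)=n/2$ under hypothesis (2). The mod-$2$ reduction of the exponent-sum homomorphism $e\colon B_n\to\mathbb{Z}$ equals the composite $B_n\to\Sigma_n\stackrel{\mathrm{sgn}}{\longrightarrow}\mathbb{Z}_2$, so on $B$ one has $e(\beta)\equiv(n-1)\,\rho(\beta)\pmod 2$ (valid because $n$ is even, and because the permutation of $\beta\in B$ is a power of an $n$-cycle). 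If $\phi$ were a lift, applying $e$ to the image of the relator yields $2\sum_i e(\phi(a_i))=0$, so $\sum_i e(\phi(a_i))=0$ is even; but mod $2$ this sum equals $(n-1)\sum_i\rho(\phi(a_i))=(n-1)\,(\theta_\tau)_{Ab}(\delta)\equiv(n-1)\cdot\tfrac n2\equiv 1\pmod 2$ (both factors odd), a contradiction. Hence no lift exists and the BUP holds.

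\textbf{Construction: otherwise the BUP fails.} Let $\delta_0=\sigma_1\cdots\sigma_{n-1}\in B$, so $\rho(\delta_0)=1$ and all powers of $\delta_0$ commute. If $N$ is orientable, set $\phi(a_i)=\delta_0^{\theta_\tau(a_i)}$ and $\phi(b_i)=\delta_0^{\theta_\tau(b_i)}$ for a standard presentation; the relator $\prod_i[a_i,b_i]$ maps to $1$. If $N$ is nonorientable with $(\theta_\tau)_{Ab}(\delta)=0$, choose integers $m_i\equiv\theta_\tau(a_i)\pmod n$ with $\sum_i m_i=0$ and set $\phi(a_i)=\delta_0^{m_i}$; the relator maps to $\delta_0^{\,2\sum_i m_i}=1$. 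In both cases $\rho\circ\phi=\theta_\tau$, producing the equivariant map. The remaining case is $N$ nonorientable with $(\theta_\tau)_{Ab}(\delta)=n/2\ne 0$, which forces $n$ even and, since hypothesis (1) is assumed to fail here, $n\equiv 0\pmod 4$. Here I would take $\phi(a_i)=\delta_0^{m_i}p_i$ with $m_i$ lifting $\theta_\tau(a_i)$ and $p_i\in P_n$: collecting the $\delta_0$-powers, the relator becomes $\delta_0^{\,nt}\cdot\widetilde P=1$ with $\widetilde P$ a product of $\langle\delta_0\rangle$-conjugates of the $p_i$'s, and one solves $\widetilde P=\delta_0^{-nt}$ first in $P_n^{\mathrm{ab}}\cong\mathbb{Z}^{\binom n2}$ — where the $n$-cycle permutes the $\binom n2$ pairs with all orbits of even length precisely because $4\mid n$, making the relevant system $(1+c)v=\mathbf{1}$ solvable — and then lifts the solution up the lower central series of $P_n$. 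Together with the Reduction, these three parts give the theorem.

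\textbf{Main obstacle.} Everything except the last case of the Construction is short and rigid: the Reduction is formal, the Obstruction rests on a single parity invariant, and the easy lifts are powers of one braid. The technical heart is the pure-braid bookkeeping for $N$ nonorientable with $4\mid n$ — realizing the relator word as $1$ in $P_n$, not merely in $P_n^{\mathrm{ab}}$, where the ``each $p_i$ occurs twice'' pattern introduces a factor of $2$ at every step of the lower central series that has to be absorbed using the freedom in the remaining generators; alternatively one replaces this algebra by a direct construction of a fibrewise embedding $M\hookrightarrow N\times\mathbb{R}^2$ over $N$, whose only obstruction is the mod-$2$ class isolated above. The orientable and $\mathbb{R}P^2$ cases are routine.
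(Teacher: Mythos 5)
This statement is not proved in the paper at all: it is quoted verbatim, with citation, from Gon\c calves--Guaschi--Laass \cite{dajovi2006}, and is only used as an input to Corollary~\ref{corz_n}; so there is no in-paper argument to compare yours with, and what you have written is an attempted reconstruction of the cited source's braid-group proof. Your Reduction is the right one and is essentially correct: failure of the BUP is equivalent, via Proposition~\ref{top-bup} and Remark~\ref{bup-pullback}, to a $\mathbb{Z}_n$-equivariant map $M\to F(\mathbb{R}^2,n)$, and since $F(\mathbb{R}^2,n)/\mathbb{Z}_n$ is aspherical with fundamental group the preimage $B\le B_n$ of the cyclic subgroup generated by the $n$-cycle, this is equivalent to lifting $\theta_\tau\colon\pi_1(N)\to\mathbb{Z}_n$ through $\rho\colon B\to\mathbb{Z}_n$ (you do not even need $N$ aspherical for this: maps of a CW complex into a $K(\pi,1)$ are classified by homomorphisms, so the $M=S^2$ case needs no separate treatment). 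Your Obstruction step is a complete and correct proof that (1)+(2) imply the BUP: the relator $a_1^2\cdots a_k^2$ forces the total exponent sum to vanish, while the sign-of-permutation identity $e(\beta)\equiv(n-1)\rho(\beta)\bmod 2$ (well defined since $n$ is even) makes it congruent to $(n-1)\cdot\tfrac n2\equiv 1\bmod 2$ when $n\equiv 2\bmod 4$. The lifts by powers of $\delta_0$ when $N$ is orientable or $(\theta_\tau)_{Ab}(\delta)=0$ are also fine, and your case analysis is exhaustive.

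The genuine gap is the remaining existence case ($N$ nonorientable, $(\theta_\tau)_{Ab}(\delta)=n/2$, $4\mid n$), which is precisely the technical heart of \cite{dajovi2006}, and your argument there does not close. Two concrete problems. First, the abelianized system is not $(1+c)v=\mathbf{1}$ in general: writing $\phi(a_i)=\delta_0^{m_i}p_i$ the relator gives an equation of the form $\sum_i c^{s_i}(1+c^{m_i})[p_i]=(\mathrm{odd})\cdot\mathbf{1}$ in $P_n^{\mathrm{ab}}$, with the residues $m_i\equiv\theta_\tau(a_i)\bmod n$ imposed; your reduction tacitly assumes some $m_i$ is a unit (indeed $\pm1$), which surjectivity of $\theta_\tau$ does not guarantee (e.g.\ $n=12$ with images $2,3,4,9$ of the crosscap generators: surjective, sum $\equiv 6$, no image a unit), so even solvability in $P_n^{\mathrm{ab}}$ requires an argument covering all admissible $\theta_\tau$, not just the single-orbit parity observation. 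Second, and more seriously, a solution in $P_n^{\mathrm{ab}}$ does not yield a homomorphism: the relator must be trivial in $B_n$ itself, and the proposed lift \emph{up the lower central series of} $P_n$ is exactly the unproved technical core (as you note, each $p_i$ occurs twice in the relator, producing a factor of $2$ at each stage), while the alternative fibrewise-embedding construction $M\hookrightarrow N\times\mathbb{R}^2$ is only asserted, not carried out. As written, the direction ``BUP $\Rightarrow$ (1) and (2)'' is therefore not established in the case $4\mid n$ with (2) holding; to complete it you would have to carry out the explicit braid construction of \cite{dajovi2006} (or a detailed geometric substitute) rather than gesture at it.
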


From Theorem~\ref{Zn-surface} together with Proposition~\ref{thm:lower-bound-index} we obtain the following result.

\begin{corollary}\label{corz_n} For the triple $(M,\mathbb{Z}_n; \mathbb{R}^2)$ given by the theorem above which satisfies the Borsuk-Ulam property, we have $\mathrm{ind}_2(M,\mathbb{Z}_n)=2$.  Further for such $(M,\mathbb{Z}_n)$ the $\mathrm{secat}(M\to M/\mathbb{Z}_n)=3$. 
\end{corollary}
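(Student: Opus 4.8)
The plan is to pin down the index $\mathrm{ind}_2(M,\mathbb{Z}_n)$ exactly and then trap $\mathrm{secat}(M\to M/\mathbb{Z}_n)$ between a lower bound coming from that index and an upper bound coming from the LS category of the quotient surface.

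First I would prove $\mathrm{ind}_2(M,\mathbb{Z}_n)\ge 2$. This is immediate from the hypothesis that $(M,\mathbb{Z}_n;\mathbb{R}^2)$ satisfies the Borsuk--Ulam property together with Item~(1) of Remark~\ref{rem:index}, which identifies $\mathrm{ind}_2(M,\mathbb{Z}_n)$ with the greatest $k$ for which $(M,\mathbb{Z}_n;\mathbb{R}^k)$ has the BUP. For the reverse inequality I would apply Proposition~\ref{firs-estimation} with $X=M$, $G=\mathbb{Z}_n$ and $q=2$: the closed surface $M/\mathbb{Z}_n$ has a CW structure, which lifts through the covering $M\to M/\mathbb{Z}_n$ to a $\mathbb{Z}_n$-CW structure on $M$, so $M$ is a $2$-dimensional CW complex carrying a cellular free $\mathbb{Z}_n$-action; moreover Theorem~\ref{Zn-surface} forces $n\equiv 2\bmod 4$, whence $e_n=2$ and the constraint $2\le q\le e_\ell$ is satisfied. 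Proposition~\ref{firs-estimation} then gives $\mathrm{ind}_2(M,\mathbb{Z}_n)\le \frac{2+1}{2-1}-1=2$; in particular the index is finite, and combined with the lower bound we obtain $\mathrm{ind}_2(M,\mathbb{Z}_n)=2$.

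Now set $n_2=\mathrm{ind}_2(M,\mathbb{Z}_n)=2<\infty$ and note that $M$, being compact, is paracompact. Theorem~\ref{thm:lower-bound-index} applied with $q=2$ (recall $2\le q=2=e_n$) then gives $\mathrm{secat}(M\to M/\mathbb{Z}_n)\ge n_2(q-1)+1=3$. For the matching upper bound, observe that $M/\mathbb{Z}_n$ is again a closed connected surface, hence a connected CW complex of dimension $2$, so of homotopical dimension at most $2$; Item~(iii) of Lemma~\ref{prop-secat-map} with $q=1$ yields $\mathrm{cat}(M/\mathbb{Z}_n)\le 2+1=3$, and Item~(iv) of Lemma~\ref{prop-secat-map} then gives $\mathrm{secat}(M\to M/\mathbb{Z}_n)\le\mathrm{cat}(M/\mathbb{Z}_n)\le 3$. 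Putting the two inequalities together, $\mathrm{secat}(M\to M/\mathbb{Z}_n)=3$.

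The only step that needs a word of care is the verification that the hypotheses of Proposition~\ref{firs-estimation} hold --- namely that the free $\mathbb{Z}_n$-action on $M$ may be taken cellular (handled by pulling back a CW structure on the quotient surface along the covering map) and that $q=2\le e_n$ (forced by $n\equiv 2\bmod 4$). With these in place, everything else is a formal consequence of the estimates already established, so I do not anticipate a genuine obstacle.
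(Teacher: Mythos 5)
Your proof is correct and takes essentially the same route as the paper: the lower bound $\mathrm{secat}(M\to M/\mathbb{Z}_n)\geq 3$ comes from the Borsuk--Ulam property via Theorem~\ref{thm:lower-bound-index} (equivalently Proposition~\ref{prop-lower-bound}), and the matching upper bound from $\mathrm{secat}(M\to M/\mathbb{Z}_n)\leq\mathrm{cat}(M/\mathbb{Z}_n)\leq 3$ using Items (iii) and (iv) of Lemma~\ref{prop-secat-map}. The only (welcome) difference is that you justify $\mathrm{ind}_2(M,\mathbb{Z}_n)\leq 2$ explicitly via Proposition~\ref{firs-estimation} (checking the cellular free action and $q=2\leq e_n$ since $n\equiv 2 \bmod 4$), a point the paper's proof leaves implicit, as it also follows from $\mathrm{secat}=3$ together with Proposition~\ref{prop-lower-bound}.
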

\begin{proof}
   From Proposition~\ref{thm:lower-bound-index}, the inequality $\mathrm{secat}(M\to M/\mathbb{Z}_n)\geq 3$ holds. On the other hand, we have:
  \begin{align*}
      \mathrm{secat}(M\to M/\mathbb{Z}_n)&\leq \mathrm{cat}\left( M/\mathbb{Z}_n\right)& \hbox{ (Item (iv) from Lemma~\ref{prop-secat-map})}\\
      &\leq 3 & \hbox{ (Item (iii) from Lemma~\ref{prop-secat-map})}.
  \end{align*} Hence, $\mathrm{secat}(M\to M/\mathbb{Z}_n)=\mathrm{cat}\left( M/\mathbb{Z}_n\right)=3$.
\end{proof}

\section*{Conflict of Interest Statement}
On behalf of all authors, the corresponding author states that there is no conflict of interest.

\bibliographystyle{plain}

\begin{thebibliography}{10}
\bibitem{aguilar2002} M. Aguilar,  S. Gitler,  and C. Prieto. Algebraic topology from a homotopical viewpoint. (New York: Springer, 2002).

\bibitem{bghz} A. Bauval, D. L. Goncalves,  C.  Hayat, and P. Zvengrowski. The Borsuk-Ulam  theorem for closed 3-manifolds having $S^2\times   \mathbb{R}$.
arXiv:2011.00657v1 [math.AT] 2 Nov 2020.
\bibitem{borsuk1933} K. Borsuk. Drei Sätze über die n-dimensionale Euklidische Sphäre. Fundam. Math. 20, 177–190  (1933).
\bibitem{cohen1976} F. Cohen, and E. L. Lusk. Configuration-like spaces and the Borsuk-Ulam theorem. Proceedings of the American Mathematical Society, 56(1), 313-317 (1976).
\bibitem{cornea2003lusternik} O. Cornea, G. Lupton, J. Oprea,  and D. Tanr{\'e}. Lusternik-Schnirelmann Category. Mathematical Surveys and Monographs, 103 (American Mathematical Society, Providence, RI, 2003).
\bibitem{fadell1962configuration} E. Fadell, and  L. Neuwirth. Configuration spaces. Math. Scand. {\bf 10} (4), 111--118 (1962).
\bibitem{daciberg2006} D.~L. Gon\c calves. The Borsuk-Ulam Theorem for surfaces. Quaestiones Mathematicae 29, 117--123 (2006).
\bibitem{dajovi2006} D.~L. Gon\c calves, J. Guaschi, and V. C. Laass. Free cyclic actions on surfaces and the Borsuk-Ulam theorem. Acta Math. Sin. (Engl. Ser.) 38, no. 10, 1803--1822 (2022). 
\bibitem{guillemin1974differential} V. Guillemin, and A. Pollack. Differential topology (Vol. 370. American Mathematical Society, 1974).
\bibitem{james1978} I. M. James. On category, in the sense of Lusternik-Schnirelmann. Topology 17.4: 331--348 (1978).
\bibitem{karasev2011} R. Karasev, and A. Volovikov. Configuration-like spaces and coincidences of maps on orbits. Algebraic \& Geometric Topology, 11(2), 1033-1052 (2011).
\bibitem{schwarz1958genus} A. S. Schwarz. The genus of a fiber space. Dokl. Akad. Nauk SSSR (NS). {\bf 119}, 219--222 (1958).
\bibitem{schwarz1966} A. Schwarz. The genus of a fiber space. Amer. Math. Soc. Transl. Ser. 2 55, 49--140 (1966).
\bibitem{steenrod1951} N. E. Steenrod. The topology of fibre bundles. IL, Miscow, 1953; Russian transl. of Princeton Math. Series, Vol. 14, Princeton Univ. Press, Princeton, New Jersey, 1951.
\bibitem{tom2008} T. tom Dieck. Algebraic topology. (Vol. 8. European Mathematical Society, 2008).
\bibitem{zapata2022} C. A. I. Zapata. Espaços de configurações no problema de planificação de movimento simultâneo livre de colisões.  Ph. D. Thesis (Universidade de São Paulo, São Paulo, 2022).
\bibitem{zapata2023} C. A. I. Zapata, D. L. Gonçalves. Borsuk–Ulam Property and Sectional Category. Bulletin of the Iranian Mathematical Society, 49(4), 41, (2023). https://doi.org/10.1007/s41980-023-00787-3
\bibitem{zapata2022higher} C. A. I. Zapata, and J. González. Higher topological complexity of a map. Turkish Journal of Mathematics, 47(6), 1616-1642 (2023). https://doi.org/10.55730/1300-0098.3453




\end{thebibliography}

\end{document}